\newtheorem{definition}{Definition}
\newtheorem{lemma}{Lemma}
\newtheorem{remark}{Remark}
\newtheorem{theorem}{Theorem}
\journal{Axioms}
\begin{document}

\title{Slow manifolds for stochastic Koper models with stable L\'evy noises }

\author[addr1]{Hina Zulfiqar}\ead{zhinazulfiqar@hotmail.com}
\author[addr2]{Shenglan Yuan\corref{cor1}}\ead{shenglan.yuan@math.uni-augsburg.de}\cortext[cor1]{Corresponding author}
\author[addr1]{Muhammad Shoaib Saleem}\ead{shoaib83455@gmail.com}		
		
\address[addr1]{Department of Mathematics, University of Okara, Renala Khurd, Punjab, Pakistan}
\address[addr2]{Institut f\"{u}r Mathematik, Universit\"{a}t Augsburg,
			86135, Augsburg, Germany}

\begin{abstract}
The Koper model is a vector field in which the differential equations describe the
electrochemical oscillations appearing in diffusion processes. This work focuses on the understanding of the slow dynamics of stochastic Koper model perturbed by stable L\'evy noise. We establish the slow manifold for stochastic Koper model with stable L\'evy noise and verify exponential tracking property. We also present a practical example to demonstrate the analytical results with numerical simulations.
\end{abstract}
\begin{keyword}
random slow manifold, Koper model, fast-slow stochastic system, L\'evy motion.
\MSC[2020] 37D10; 37H30; 60H10
\end{keyword}
\maketitle

\section{Introduction}\label{s:1}
The Koper model \cite{K} is an idealized model of the chemical reaction described in \cite{balabaev2019invariant}. Invariant manifolds are useful in investigating the dynamical behavior of the multiscale systems \cite{bates1998existence,henry2006lecture}. Invariant manifolds for investigating the dynamical behavior of deterministic systems without being influenced of stochastic forces are discussed in \cite{chicone1997center,chow1991smooth,ruelle1982characteristic}, while invariant manifolds for deterministic systems influenced by stochastic forces are constructed in \cite{caraballo2004existence,chow1988invariant,YLZ}. An invariant manifold for a fast-slow stochastic system in which fast mode is indicated by the slow mode tends to slow manifold when the scale parameter goes to zero. Moreover, slow manifold converges to critical manifold as the scale parameter approaches zero.

Slow manifold for stochastic system driven by Brownian motion is demonstrated in \cite{fu2012slow,schmalfuss2008invariant,wang2013slow}, and its numerical simulations are presented in \cite{ren2015approximation,ren2015parameter}. L\'evy motions arise from the models for fluctuations, they have independent, stationary increments and discontinuous paths. For example, L\'evy processes affect the evolution of the state variables in the turbulent flow of fluids \cite{YBD}. Some models about stochastic systems processed by L\'evy noise are explained in \cite{QY,YB,YZD}. The slow manifolds under L\'evy noise are constructed in \cite{yuan2017slow}. The existence of slow manifold for nonlocal fast-slow stochastic evolutionary equations is proved in \cite{zulfiqar2021slow,zulfiqar2019slow}. Invariant manifold of variable stability in the Koper model is established in  \cite{balabaev2019invariant}. It continues to be an active topic on the characterization of stochastic Koper model driven by L\'evy process for both theoretical reasons and applications.

The goal of this article is to construct the three-dimensional stochastic Koper model in Euclidean space $\mathbb{R}^3$ and establish the existence of slow manifold for stochastic Koper model processed by $\alpha$-stable L\'evy noise with $\alpha \in (1,2)$. Namely, we consider the stochastic Koper system in the following version
\begin{equation}\label{K}
\left\{
  \begin{array}{ll}
    \dot{x}=\frac{1}{\epsilon}[ky-x^{3}+3x-\lambda(z)]+\sigma\epsilon^{-\frac{1}{\alpha}}\dot{L}_{t}^{\alpha},  \\
    \dot{y}=x-2y+z,  \\
    \dot{z}=\hat{\epsilon}(y-z),
  \end{array}
\right.
\end{equation}
where $k$ and $\lambda(z)$ are the main bifurcation parameters. Here, $\hat{\epsilon}=1$, but $\epsilon$ represents a small parameter with the property $0<\epsilon\ll1$, and it indicates the ratio of two times scales such that stochastic fast-slow system \eqref{K} has one fast variable $x$, and two slow variables $y$ and $z$. The dot stands for the
differentiation with respect to time $t$. The noise $L_{t}^{\alpha}$ is two-sided symmetric $\alpha$-stable L\'evy process taking real values, with the stability index $\alpha \in (1,2)$ and the intensity $\sigma>0$; see the references \cite{applebaum2009levy,chow1991smooth}.

We start from a random transformation such that a solution of stochastic Koper model \eqref{K} can be expressed as a transformed solution of some random system. The establishment of slow manifold for defined random system is proved by the utilization of Lyapunov-Perron method \cite{chow1988invariant,duan2004smooth}.

The article is organized as follows. In Section \ref{P}, some concepts about random dynamical system, and stochastic differential equation processed by L\'evy motion are discussed. In Section \ref{SA}, the stability of the stochastic Koper system \eqref{K} is proved and a random transformation is defined, which converts stochastic Koper system \eqref{K} into random system. In Section \ref{Rsm}, a short review about random invariant manifold and the existence of exponential tracking slow manifold for random system is provided. In Section \ref{E}, we present numerical
results using an example from electrochemical oscillations to corroborate our analytical
results. Finally, Section \ref{Cf} summarizes our findings as well as directions for future study.

\section{Preliminaries}\label{P}
In this section, some concepts about the random dynamical system are given.
\begin{definition}
Let $(\Omega, \mathcal{F},\mathbb{P})$ be a probability space and $\theta=\{\theta_{t}\}_{t\in\mathbb{R}}$ be a flow on $\Omega$ satisfying the conditions\\
 $\bullet$  $\theta_{0}={\rm Id}_{\Omega};$\\
   $\bullet$ $\theta_{t_{1}}\theta_{t_{2}}=\theta_{t_{1}+t_{2}},$ where $t_{1},t_{2}\in\mathbb{R};$\\and the mapping  $\theta:\mathbb{R}\times\Omega\rightarrow\Omega$ can be defined by  $(t,\omega)\mapsto \theta_{t}\omega$, which is $\mathcal{B}(\mathbb{R})\otimes\mathcal{F}-\mathcal{F}$ measurable. Here, we consider that the probability measure $\mathbb{P}$ is invariant with respect to the flow $\{\theta_{t}\}_{t\in\mathbb{R}}$, i.e., $\theta_{t}\mathbb{P}=\mathbb{P}$ for all $t\in\mathbb{R}$. Then $\Theta=(\Omega, \mathcal{F},\mathbb{P},\theta)$ is said to be a metric dynamical system \cite{yuan2017slow}.
\end{definition}

Throughtout this article, we use scalar L\'evy process. Take $L_{t}^{\alpha}$, $\alpha\in (1,2)$ as a symmetric two-sided $\alpha$-stable L\'evy process with values in $\mathbb{R}$. Consider a canonical sample space for it. Let $\Omega=D(\mathbb{R},\mathbb{R})$ be the space of c\`adl\`ag functions having zero value at $t=0$, i.e.,
\begin{equation*}
D(\mathbb{R},\mathbb{R})=\big\{\omega :\ \mbox{for } \forall~t \in\mathbb{R},\ \lim_{s\uparrow t}\omega(s)=\omega(t-),\  \lim_{s\downarrow t}\omega(s)=\omega(t)\ \mbox{exist}\ \mbox{and} \ \omega(0)=0\big\}.
\end{equation*}
If we use standard usual open-compact metric, then the space $D(\mathbb{R},\mathbb{R})$ may not be separable and complete. But the space $D(\mathbb{R},\mathbb{R})$ of real-valued c\`adl\`ag functions can be extended by introducing another metric ${\rm d}^{0}$ since it can be made into the complete and separable space on unit interval or on $\mathbb{R}$  \cite{chao2018stable}. For functions $\omega_{1}, \omega_{2}\in D(\mathbb{R},\mathbb{R})$, ${\rm d}^{0}(\omega_{1}, \omega_{2})$ is defined by
\begin{eqnarray*}
{\rm d}^{0}(\omega_{1},\omega_{2})=\inf\left\{\varepsilon>0:
     |\omega_{1}(t)-\omega_{2}(\lambda t)|\leq\varepsilon,\ \big|\ln\frac{\arctan(\lambda t)-\arctan(\lambda s)}{\arctan(t)-\arctan(s)}\big|\leq\varepsilon,\right.
     \\
    \left.\mbox{for every}~t, s\in\mathbb{R}\ \mbox{and some}\ \lambda\in\Lambda^{\mathbb{R}}\right\},
\end{eqnarray*}
where
\begin{equation*}
\Lambda^{\mathbb{R}}=\{\lambda :\mathbb{R}\rightarrow\mathbb{R};\ \lambda\ \mbox{is injective increasing},\ \lim\limits_{t\rightarrow-\infty}\lambda(t)=-\infty,\ \lim\limits_{t\rightarrow\infty}\lambda(t)=\infty \}.
\end{equation*}
By Theorem 3.2 in \cite{wei2016weak}, the space $D(\mathbb{R},\mathbb{R})$ equipped with Skorokhod's $\mathcal{J}_{1}$-topology generated by the metric $\rm{d}^{0}$ is a complete and separable space, i.e., Polish space. On this Polish space, we consider a measurable flow $\theta=\{\theta_{t}\}_{t\in\mathbb{R}}$ defined by mapping
 \begin{align*}
 \theta:\mathbb{R}\times D(\mathbb{R},\mathbb{R})\rightarrow D(\mathbb{R},\mathbb{R}),\mbox{ such that}, \theta_{t}\omega(\cdot)=\omega(\cdot+t)-\omega(t),\end{align*}
 where $\omega\in D(\mathbb{R},\mathbb{R})$.

The sample paths of L\'evy process are in $D(\mathbb{R},\mathbb{R})$. Assume that $\mathbb{P}$ is the probability measure on $\mathcal{F}=\mathcal{B}(D(\mathbb{R},\mathbb{R}))$ introduced by the distribution of symmetric two-sided $\alpha$-stable L\'evy process. We consider the restriction of $\mathbb{P}$ on $\mathcal{F}$, but still it is indicated by $\mathbb{P}$. Observe that $\mathbb{P}$ is ergodic with respect to the shift $\{\theta_{t}\}_{t\in\mathbb{R}}$. Thus $(D(\mathbb{R},\mathbb{R}), \mathcal{F}, \mathbb{P}, \{\theta_{t}\}_{t\in\mathbb{R}})$ is a metric dynamical system. It is worth pointing out that we can take a subset $\Omega_{1}=D^{0}(\mathbb{R},\mathbb{R})\subset \Omega=D(\mathbb{R},\mathbb{R})$ with $\mathbb{P}$-measure one instead of $D(\mathbb{R},\mathbb{R})$. Here, $D^{0}(\mathbb{R},\mathbb{R})$ is $\{\theta_{t}\}_{t\in\mathbb{R}}$-invariant, which means that $\theta_{t}\Omega_{1}=\Omega_{1}$ for $t\in \mathbb{R}$.

\begin{definition}
 A cocycle $\phi$ satisfies that \begin{align*}
&\phi(0,\omega,u)=u;\\
&\phi(t_{1}+t_{2},\omega,u)=\phi(t_{2},\theta_{t_{1}}\omega,\phi(t_{1},\omega,u)).
\end{align*}
It is $\mathcal{B}(\mathbb{R})\otimes\mathcal{F}\otimes\mathcal{B}(\mathbb{R}^{3})-\mathcal{F}$ measurable and defined by map:
$$\phi:\mathbb{R}\times\Omega\times\mathbb{R}^{3}\rightarrow\mathbb{R}^{3},$$ for $u\in\mathbb{R}^{3}$, $\omega\in\Omega$ and $t_{1},t_{2}\in\mathbb{R}$. Metric dynamical system $(\Omega,\mathcal{F},\mathbb{P},\theta)$ with the cocycle $\phi$ generates a random dynamical system \cite{arnold2013random}.
\end{definition}

The above cocycle property indicates that random dynamical system $\phi$ arrives at the same destinaton whether we consider the position $\phi(t_{1}+t_{2},\omega,u)$ of
the path starting in $u$ at time $t_1+t_2$ or the position $\phi(t_{2},\theta_{t_{1}}\omega,\phi(t_{1},\omega,u))$ of the path with random initial state $\phi(t_{1},\omega,u)$ at time $t_2$. It is important to note that the path moves from $u$ to $\phi(t_{1},\omega,u)$, the underlying $\omega$ potentially may change as well over
time $t_{1}$. Instead of $\omega$ we need to use $\theta_{t_{1}}\omega$ for the new movement with the starting point $\phi(t_{1},\omega,u)$, where $\theta_{t_{1}}$ indicates the new development of the underlying probability space over time $t_{1}$.

If $u\mapsto\phi(t,\omega,u)$ is continuous or differentiable for $t\in\mathbb{R}$ and $\omega\in\Omega$, then random dynamical system $(\Omega,\mathcal{F},\mathbb{P},\theta, \phi)$ is also continuous or differentiable. The family of nonempty closed sets $\mathcal{M}=\{\mathcal{M}(\omega)\subset\mathbb{R}^3:\omega\in \Omega\}$ is said to be a random set if for all $u'\in\mathbb{R}^3$ the map:$$\omega\mapsto\mathop {\inf }\limits_{u \in \mathcal{M}(\omega)}|u-u'|,$$ is a random variable.

\begin{definition}
If random variable $u(\omega)$ taking values in $\mathbb{R}^3$, satisfies $$\phi(t,\omega,u(\omega))=u(\theta_{t}\omega),\indent \indent a.s.$$for all $t\in\mathbb{R}$. Then, random variable $u(\omega)$ is known as stationary orbit or random fixed point \cite{duan2015introduction}.
\end{definition}

\begin{definition}
 A random set $\mathcal{M}=\{\mathcal{M}(\omega)\subset\mathbb{R}^3:\omega\in \Omega\}$ is called random positively invariant set \cite{fu2012slow} for random dynamical system $\phi$, if$$\phi(t,\omega,\mathcal{M}(\omega))\subset \mathcal{M}(\theta_{t}\omega),$$ for all $\omega\in\Omega$ and $t\geq0$.
\end{definition}

\begin{definition}
 Introduce a map $$l:{\mathbb{R}^{2}}\times\Omega\rightarrow {\mathbb{R}},$$ such that $v\mapsto l(v,\omega)$ is Lipschitz continuous for all $\omega\in\Omega$. Consider $$\mathcal{M}(\omega)=\{(l(v,\omega),v): v\in{\mathbb{R}^{2}}\},$$
such that random positively invariant set $\mathcal{M}=\{\mathcal{M}(\omega)\subset\mathbb{R}^3:\omega\in \Omega\}$ can be expressed as a graph of Lipschitz continuous map $l$, then $\mathcal{M}$ is known as Lipschitz continuous invariant manifold \cite{yuan2017slow}.
\end{definition}

Moreover, $\mathcal{M}$ posses the exponential tracking property, if there exists an $u'\in \mathcal{M}(\omega)$ for all $u\in \mathbb{R}^3$ satisfying $$|\phi(t,\omega,u)-\phi(t,\omega,u')|\leq c_{1}(u,u',\omega)e^{c_{2}t}|u-u'|,$$
for all $\omega\in \Omega$. Here, $c_{1}$ is positive random variable, and $c_{2}$ is negative constant.

 \section{Stability Analysis}\label{SA}
Stochastic Koper system \eqref{K} is consist of one fast mode and two slow modes. The state space for the fast mode is $\mathbb{R}$, and the state space for the slow modes is $\mathbb{R}^{2}$. For the construction of slow manifold, we assume the following hypotheses for the Koper system  \eqref{K}.

\textbf{(H1)} (Lipschitz continuity) With regard to nonlinear parts of \eqref{K}, there are positive constants $L_{1}, L_{2}, L_{3}>0$ such that for all $(x_{i},y_{i},z_{i})^{T}$ in $\mathbb{R}^3$ and for all $(x_{j},y_{j},z_{j})^{T}$ in $\mathbb{R}^3$,
\begin{align*}
&|g_{1}(x_{i},y_{i},z_{i})-g_{1}(x_{j},y_{j},z_{j})|+|g_{2}(x_{i},y_{i},z_{i})-g_{2}(x_{j},y_{j},z_{j})|+|g_{3}(x_{i},y_{i},z_{i})-g_{3}(x_{j},y_{j},z_{j})| \\
&\leq L_{1}(|x_{i}-x_{j}|+|y_{i}-y_{j}|+|z_{i}-z_{j}|),
\end{align*}
where $T$ is the transpose of the vector, and $g_{m}: \mathbb{R}^{3}\rightarrow \mathbb{R}, m=1,2,3$ are defined by
 $g_{1}(x,y,z)=ky-x^{3}+3x-\lambda(z), g_{2}(x,y,z)=x-2y+z,$ and $g_{3}(x,y,z)=y-z$.

\textbf{(H2)} (Growth) For all $(x,y,z)\in \mathbb{R}^{3}$, there exists a positive constant $L$ such that
$$ |g_{1}(x,y,z)|^{2}+|g_{2}(x,y,z)|^{2}+|g_{3}(x,y,z)|^{2}\leq L( 1+|x|^{2}+|y|^{2}+|z|^{2}).$$

\textbf{(H3)} (Monotonocity) For all $x_{1},x_{2}\in \mathbb{R}$, there exists a positive constant $L$ such that
$$ (x_{2}-x_{1})(g_{1}(x_{2})-g_{1}(x_{1}))\leq -L(|x_{2}-x_{1}|^{2}).$$\\

Now let $\Theta_{1}=(\Omega_{1},\mathcal{F}_{1},\mathbb{P}_{1},\theta_{t}^{1})$, $\Theta_{2}=(\Omega_{2},\mathcal{F}_{2},\mathbb{P}_{2},\theta_{t}^{2})$ and $\Theta_{3}=(\Omega_{3},\mathcal{F}_{3},\mathbb{P}_{3},\theta_{t}^{3})$  be three independent driving (metric) dynamical systems as mentioned in Section 2. Define
\begin{align*}\Theta=\Theta_{1}\times\Theta_{2}\times\Theta_{3}=(\Omega_{1}\times\Omega_{2}\times\Omega_{3},\mathcal{F}_{1}\otimes\mathcal{F}_{2}\otimes\mathcal{F}_{3},\mathbb{P}_{1}\times\mathbb{P}_{2}\times\mathbb{P}_{3},(\theta_{t}^{1},\theta_{t}^{2},\theta_{t}^{3})^{T}),\end{align*}
and $$\theta_{t}\omega:=(\theta_{t}^{1}\omega_{1},\theta_{t}^{2}\omega_{2}, \theta_{t}^{3}\omega_{3})^{T}, \mbox{ for } \omega:=(\omega_{1},\omega_{2},\omega_{3})^{T}\in \Omega:=\Omega_{1}\times\Omega_{2}\times\Omega_{3}.$$
Let $L_{t}^{\alpha}, \alpha\in(1,2)$ be a two-sided symmetric $\alpha$-stable L\'evy process in $\mathbb{R}$ with a generating triplet $(a,\mathcal{Q},v)$. We will prove the existence and uniqueness of solution for the stochastic Koper system \eqref{K}.

\begin{lemma} \cite{protter2004stochastic} Under Lipschitz condition ${\rm \textbf{(H1)}}$, the equation
\begin{equation}\label{eq2}
d\delta(t)=g_{1}(\delta(t))dt+\sigma dL_{t}^{\alpha},\indent \delta(0)=\delta_{0},
\end{equation}
has the unique c\`adl\`ag solution
\begin{align*}
\delta(t)=\delta_0+\int_0^{t}g_1(\delta(s))ds+\sigma L_{t}^{\alpha}.
\end{align*}
According to the L\'evy-It\^o decomposition, $L_t^{\alpha}$ can be expressed as
\begin{equation*}
 L_{t}^{\alpha}=\int_{|w|<1}w\tilde{N}(t,dw)+\int_{|w|\geq 1}wN(t,dw).
\end{equation*}
It follows that
\begin{equation*}
\delta(t)=\delta_0+\int_0^{t}g_1(\delta(s))ds+\sigma\int_{|w|<1}w\tilde{N}(t,dw)+\sigma\int_{|w|\geq 1}wN(t,dw).
\end{equation*}
\end{lemma}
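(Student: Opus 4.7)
The plan is to exploit the fact that the L\'evy noise enters \eqref{eq2} additively, reducing existence and uniqueness to a nearly pathwise analysis. First, I would apply the L\'evy--It\^o decomposition to write
\begin{equation*}
\sigma L_{t}^{\alpha}=\sigma\!\int_{|w|<1}\!w\,\tilde{N}(t,dw)+\sigma\!\int_{|w|\geq 1}\!w\,N(t,dw)=:M_{t}+J_{t},
\end{equation*}
where $M_{t}$ is an $L^{2}$ c\`adl\`ag martingale (since the L\'evy measure satisfies $\int_{|w|<1}|w|^{2}v(dw)<\infty$ for $\alpha\in(1,2)$, compensation makes the small-jump integral square-integrable) and $J_{t}$ is a finite-activity pure jump process whose jump times $0<\tau_{1}<\tau_{2}<\cdots$ almost surely diverge to $+\infty$.

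Next, on each random interval $[\tau_{n-1},\tau_{n})$ (with $\tau_{0}=0$) the equation reduces to
\begin{equation*}
\delta(t)=\delta(\tau_{n-1})+\int_{\tau_{n-1}}^{t}g_{1}(\delta(s))\,ds+(M_{t}-M_{\tau_{n-1}}),
\end{equation*}
whose right-hand side defines a Picard map on the complete space of adapted c\`adl\`ag processes endowed with the weighted norm $\|u\|_{\beta}=(\mathbb{E}\sup_{s\leq t}e^{-\beta s}|u(s)|^{2})^{1/2}$. Hypothesis (H1) applied to $g_{1}$, combined with Doob's martingale inequality for $M$, makes this map a strict contraction for $\beta$ sufficiently large, producing a unique fixed point on each interval. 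At $t=\tau_{n}$ I add the (deterministic given $\omega$) jump $\Delta J_{\tau_{n}}$, update the initial condition, and iterate; since $\tau_{n}\to\infty$ almost surely, these local pieces paste together into a unique global c\`adl\`ag solution.

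Uniqueness on each interval then follows by subtracting two putative solutions: the martingale and jump contributions cancel, so only the drift difference remains, and (H1) together with Gronwall's inequality forces the difference to vanish. The main technical obstacle is the lack of second moments of $L_{t}^{\alpha}$ globally; the L\'evy--It\^o split is essential precisely because it isolates the $L^{2}$-integrable small-jump piece from the pathwise finite sum of large jumps, so that classical Banach fixed-point and Gronwall arguments apply on each inter-jump interval. In this sense the statement is the specialization of Protter's general semimartingale existence--uniqueness theorem to the additive-noise, Lipschitz-drift setting, and the closed-form integral representation in the conclusion follows immediately from the decomposition.
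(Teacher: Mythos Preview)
The paper does not supply a proof of this lemma at all; it merely cites Protter and records the L\'evy--It\^o decomposition inside the statement. Your proposal therefore goes well beyond what the paper does, and the interlacing strategy you outline---split $\sigma L_{t}^{\alpha}$ into an $L^{2}$ compensated small-jump martingale plus a finite-activity large-jump part, run a Picard contraction between large jumps, then patch---is indeed the standard route behind Protter's general theorem and is correct in spirit. One small technical wrinkle: setting up the Picard map with an expectation-weighted norm on the \emph{random} intervals $[\tau_{n-1},\tau_{n})$ is awkward; the cleaner version is to first solve the SDE driven by $M$ alone on a deterministic $[0,T]$ (where Doob and the weighted norm work straightforwardly), and only then interlace the large jumps pathwise.

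It is worth noting that because the noise here is purely additive, a much shorter argument is available and avoids martingale estimates entirely: set $\zeta(t)=\delta(t)-\sigma L_{t}^{\alpha}$, so that $\zeta$ satisfies the random ordinary differential equation $\dot{\zeta}(t)=g_{1}\big(\zeta(t)+\sigma L_{t}^{\alpha}(\omega)\big)$, which for each fixed $\omega$ has a globally Lipschitz right-hand side by (H1). Classical Picard--Lindel\"of then gives a unique continuous $\zeta$, and $\delta=\zeta+\sigma L^{\alpha}$ is the unique c\`adl\`ag solution. This is exactly the transformation the paper itself uses later (the map $\mu$ in Section~\ref{SA}) to pass from the stochastic system to the random system, so it would dovetail more directly with the rest of the argument than the interlacing proof. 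Your approach buys generality (it survives multiplicative noise); the substitution buys brevity and internal consistency with the paper.
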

\begin{remark} (\cite[p.191]{duan2015introduction}) $L_{ct}^{\alpha}$ and $c^{\frac{1}{\alpha}}L_{t}^{\alpha}$ have the same distribution for every $c>0$, i.e., $L_{ct}^{\alpha}\overset{d}{=} c^{\frac{1}{\alpha}}L_{t}^{\alpha}.$
\end{remark}

\begin{lemma} Under the assumptions {\rm \textbf{(H1)}-\textbf{(H3)}}, stochastic Koper system \eqref{K} has a unique solution.\end{lemma}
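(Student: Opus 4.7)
The plan is to recast the Koper system as a three-dimensional stochastic differential equation driven by the scalar $\alpha$-stable Lévy process and then combine the existence result already recorded as Lemma 1 with a dissipativity argument, rather than trying to apply a purely Lipschitz theorem directly (the cubic term $-x^{3}$ prevents global Lipschitz continuity of $g_{1}$ on all of $\mathbb{R}$, even though the stated (H1) is phrased as if it holds globally).

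First I would introduce the compact form $U(t)=(x(t),y(t),z(t))^{T}$ and write \eqref{K} as
$$dU(t)=F(U(t))\,dt+\Sigma\, dL_{t}^{\alpha},$$
with drift $F(U)=(\tfrac{1}{\epsilon}g_{1}(U),\,g_{2}(U),\,\hat{\epsilon}\,g_{3}(U))^{T}$ and noise coefficient $\Sigma=(\sigma\epsilon^{-1/\alpha},0,0)^{T}$. For existence, I would freeze the slow pair $(y,z)$ and apply Lemma 1 to the one-dimensional equation for the fast mode $x$; the slow equations are affine in $(y,z)$ given $x$ and are integrated pathwise. Iterating this construction (Picard-type iteration on a compact time interval) produces a càdlàg local solution. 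Global existence on any finite horizon is then obtained from the linear growth condition (H2), which rules out explosion by the usual stopping-time/Gronwall localization for jump SDEs (see \cite{applebaum2009levy,protter2004stochastic}).

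For uniqueness, the key step, and also the main obstacle, is to control the cubic nonlinearity inside $g_{1}$. If $U_{1}=(x_{1},y_{1},z_{1})$ and $U_{2}=(x_{2},y_{2},z_{2})$ are two solutions starting from the same initial datum and driven by the same Lévy path, then the jump parts cancel and the difference $x_{1}-x_{2}$ is absolutely continuous. Using the monotonicity hypothesis (H3), I would estimate
$$\tfrac{d}{dt}(x_{1}-x_{2})^{2}=\tfrac{2}{\epsilon}(x_{1}-x_{2})\bigl(g_{1}(x_{1},y_{1},z_{1})-g_{1}(x_{2},y_{2},z_{2})\bigr)\leq -\tfrac{2L}{\epsilon}(x_{1}-x_{2})^{2}+C\bigl((y_{1}-y_{2})^{2}+(z_{1}-z_{2})^{2}\bigr),$$
where the cross terms in $y$ and $z$ are handled by the Lipschitz part of (H1). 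For the slow coordinates, which involve only linear (hence globally Lipschitz) dynamics in $U$, a direct application of (H1) yields analogous inequalities for $(y_{1}-y_{2})^{2}$ and $(z_{1}-z_{2})^{2}$.

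Finally, I would add the three estimates and invoke Gronwall's inequality to conclude $U_{1}\equiv U_{2}$ almost surely, giving pathwise uniqueness on every finite interval and, by standard patching, on $\mathbb{R}_{+}$. The delicate part of the argument is therefore not the noise, which enters only through an additive one-dimensional $\alpha$-stable term handled by Lemma 1, but the simultaneous use of the one-sided Lipschitz/dissipativity bound (H3) in the fast direction together with the Lipschitz control (H1) in the slow directions to close a coupled Gronwall loop.
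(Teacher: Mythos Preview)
Your argument is sound and, in fact, more careful than the paper's own proof. The paper does not carry out any of the estimates you sketch: it simply rewrites \eqref{K} in vector form, invokes \cite{xiaoyu2022central} and \cite[Theorem III.2.3.2]{jacod2013limit} for the one-dimensional fast equation \eqref{eq2} (together with the existence of an exponentially mixing invariant measure for the fast semigroup), and then cites \cite[Chapter 6]{applebaum2009levy} to conclude that a unique mild solution exists for the full system. In particular, the paper never explicitly uses the monotonicity hypothesis \textbf{(H3)} in the proof; it is absorbed into the cited black-box results.

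Your route differs in that you make explicit what those references are doing: local Lipschitz plus linear growth \textbf{(H2)} for global existence, and the one-sided Lipschitz/dissipativity bound \textbf{(H3)} in the fast direction, coupled with the genuine Lipschitz control \textbf{(H1)} on the slow directions, to close a Gronwall loop for pathwise uniqueness. This buys you a self-contained argument and, importantly, it correctly flags that the cubic $-x^{3}$ obstructs a global Lipschitz bound on $g_{1}$, so \textbf{(H3)} is what is really doing the work for uniqueness in the fast variable. The only place where your sketch is a bit loose is the ``freeze $(y,z)$ and iterate'' step for existence; a cleaner alternative is to treat the full drift $F$ directly as locally Lipschitz with linear growth and appeal to the standard existence theory for L\'evy-driven SDEs (e.g., \cite{applebaum2009levy,protter2004stochastic}), which is in spirit what the paper cites anyway.
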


\begin{proof}
Rewrite the stochastic Koper system \eqref{K} into the form
\begin{align*} \left( {\begin{array}{*{20}{c}}
{\dot{x}}\\
\dot{y}\\
\dot{z}\\
\end{array}} \right)=
\left( {\begin{array}{*{20}{c}}
{\frac{1}{\epsilon}g_{1}(x,y,z)}\\
g_{2}(x,y,z)\\
g_{3}(x,y,z)\\
\end{array}} \right)
+\left( {\begin{array}{*{20}{c}}
{\sigma\epsilon^{-\frac{1}{\alpha}}\dot{L}_{t}^{\alpha}}\\
0\\0\\
\end{array}} \right)
\end{align*}
From \cite{xiaoyu2022central}, under the assumptions {\rm \textbf{(H1)}-\textbf{(H3)}}, it implies from \cite[Theorem III.2.3.2]{jacod2013limit} that there exists a unique solution of the equation \eqref{eq2}.
Moreover, there also exists one exponentially mixing invariant measure with respect to transition semigroup of $x(t)$.  Then by \cite[chapter 6]{applebaum2009levy}, the assumptions ${\rm \textbf{(H1)}-\textbf{(H3)}}$ indicate that there exists a unique mild solution $(x(t), y(t), z(t))^{T}$ in $\mathbb{R}^{3}$ for the stochastic Koper system \eqref{K}.
\end{proof}
Define a random transformation
\begin{align*}\left( {\begin{array}{*{20}{c}}
{X}\\
Y\\
Z\\
\end{array}} \right):=\mu(\theta_{t}\omega,x,y,z):=\left( {\begin{array}{*{20}{c}}
x-\sigma\eta^{\epsilon}(\theta_{t}\omega)\\
y\\
z\\
\end{array}} \right).
\end{align*}
where $\eta^{\epsilon}(\theta_{t}\omega):=\epsilon^{-\frac{1}{\alpha}}L_{t}^{\alpha}(\omega)$. Then $(X(t),Y(t),Z(t))=\mu(\theta_{t}\omega,x,y,z)$ satisfies the random system
\begin{equation}\label{eq4}
\left\{
  \begin{array}{ll}
dX=\frac{1}{\epsilon}g_{1}(X+\sigma\eta^{\epsilon}(\theta_{t}\omega),Y,Z)dt,  \\
dY=g_{2}(X+\sigma\eta^{\epsilon}(\theta_{t}\omega),Y, Z)dt, \\
dZ=g_{3}(X+\sigma\eta^{\epsilon}(\theta_{t}\omega),Y,Z)dt.
\end{array}
\right.
\end{equation}
The term $\sigma\eta^{\epsilon}(\theta_{t}\omega)$ do not change the Lipschitz constants of $g_{1}$, $g_{2}$ and $g_{3}$. So $g_{1}$, $g_{2}$ and $g_{3}$ in random dynamical system \eqref{eq4} and in stochastic dynamical system \eqref{K} have the same Lipschitz constants.
The random system \eqref{eq4} can be solved for any $\omega\in \Omega$. So, for any initial value $(X(0),Y(0),Z(0))^{T}=(X_{0},Y_{0},Z_{0})^{T}$, the solution operator
\begin{align*}
(t,\omega,(X_{0},Y_{0},Z_{0})^{T})&\mapsto\Phi(t,\omega,(X_{0},Y_{0},Z_{0})^{T})\\
&=\big(X(t,\omega,(X_{0},Y_{0},Z_{0})^{T}),Y(t,\omega,(X_{0},Y_{0},Z_{0})^{T}),Z(t,\omega,(X_{0},Y_{0},Z_{0})^{T})\big)^{T},
\end{align*}
represents the random dynamical system for \eqref{eq4}. Furthermore,
\begin{align*}
\phi(t,\omega,(X_{0},Y_{0},Z_{0})^{T})=\Phi(t,\omega,(X_{0},Y_{0},Z_{0})^{T})+(\sigma\eta^{\epsilon}(\theta_{t}\omega),0,0)^{T},
\end{align*}
characterizes the random dynamical system generated by stochastic Koper system \eqref{K}.

\section{Random slow manifolds}\label{Rsm}
Introduce the Banach spaces of functions for investigating the random system \eqref{eq4}. For any $\beta\in\mathbb{R}$,
\begin{align*}
C_{\beta}^{\mathbb{R},-}&:=\Big\{\Phi:(-\infty,0]\rightarrow \mathbb{R}\mbox{ is continuous and } \mathop {\mbox{sup} }\limits_{t\in(-\infty,0]}|e^{-\beta t}\Phi(t)|<\infty\Big\},\\
C_{\beta}^{\mathbb{R},+}&:=\Big\{\Phi:[0,\infty)\rightarrow \mathbb{R}\mbox{ is continuous and } \mathop {\mbox{sup} }\limits_{t\in[0,\infty)}|e^{-\beta t}\Phi(t)|<\infty\Big\},
\end{align*}
with norms
\begin{equation*}
||\Phi||_{C_{\beta}^{\mathbb{R},-}}:=\mathop {\mbox{sup} }\limits_{t\in(-\infty,0]}|e^{-\beta t}\Phi(t)|,\quad \mbox{ and } \quad ||\Phi||_{C_{\beta}^{\mathbb{R},+}}:=\mathop {\mbox{sup} }\limits_{t\in[0,\infty)}|e^{-\beta t}\Phi(t)|.
\end{equation*}
Let $C_{\beta}^{\mathbb{R}^3,\pm}$ be the product of spaces $C_{\beta}^{\mathbb{R}^3,\pm}:=C_{\beta}^{\mathbb{R},\pm}\times C_{\beta}^{\mathbb{R},\pm}\times C_{\beta}^{\mathbb{R},\pm}$,
with norm
\begin{align*}
||U||_{C_{\beta}^{\mathbb{R}^3,\pm}}=||X||_{C_{\beta}^{\mathbb{R},\pm}}+||Y||_{C_{\beta}^{\mathbb{R},\pm}}+||Z||_{C_{\beta}^{\mathbb{R},\pm}},\indent U=(X,Y,Z)^{T}\in C_{\beta}^{\mathbb{R}^3,\pm}.
\end{align*}

For $U(\cdot,\omega)=\big(X(\cdot,\omega),Y(\cdot,\omega),Z(\cdot,\omega)\big)^{T}\in C_{\beta}^{\mathbb{R}^3,-}$, it is the solution of \eqref{eq4} with initial value $U_{0}=(X_{0},Y_{0},Z_{0})^{T}$ iff $U(t,\omega)$ satisfies
\begin{equation}\label{sde}
\left( {\begin{array}{*{20}{c}}
{X(t)}\\
Y(t)\\
Z(t)\\
\end{array}} \right)=
\left( {\begin{array}{*{20}{c}}
{\frac{1}{\epsilon}\int_{-\infty}^{t}g_{1}\big(X(s)+\sigma\eta^{\epsilon}(\theta_{s}\omega),Y(s),Z(s)\big)ds}\\
Y_{0}+\int_{0}^{t}g_{2}\big(X(s)+\sigma\eta^{\epsilon}(\theta_{s}\omega),Y(s),Z(s)\big)ds\\
Z_{0}+\int_{0}^{t}g_{3}\big(X(s)+\sigma\eta^{\epsilon}(\theta_{s}\omega),Y(s),Z(s)\big)ds\\
\end{array}} \right).
\end{equation}
\begin{lemma}\label{Uu}
Suppose that $$U(t,\omega,U_{0})=\Big(X\big(t,\omega,(X_{0},Y_{0},Z_{0})^{T}\big) ,Y\big(t,\omega,(X_{0},Y_{0},Z_{0})^{T}\big),Z\big(t,\omega,(X_{0},Y_{0},,Z_{0})^{T}\big)\Big)^{T}$$ is the solution of  \eqref{sde} with $t\leq 0$.
Then $U(t,\omega,U_{0})$ is the unique solution in $C_{\beta}^{\mathbb{R}^3,-}$, where $U_{0}=(X_{0},Y_{0},Z_{0})^{T}$ is the initial value.
\end{lemma}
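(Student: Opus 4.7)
The natural approach is a Banach fixed-point argument (Lyapunov--Perron method) on the weighted space $C_\beta^{\mathbb{R}^3,-}$. I would denote the right-hand side of \eqref{sde} by $\mathcal{J}U$ and observe that a function $U\in C_\beta^{\mathbb{R}^3,-}$ solves \eqref{sde} if and only if it is a fixed point of $\mathcal{J}$. Uniqueness of the solution therefore reduces to showing that $\mathcal{J}$ is a strict contraction on $C_\beta^{\mathbb{R}^3,-}$ for a suitable choice of the weight $\beta$; once that is established, the Banach fixed-point theorem forces $U(\cdot,\omega,U_0)$ to coincide with the unique fixed point.

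The key step is a weighted Lipschitz estimate. For two candidates $U_i=(X_i,Y_i,Z_i)^T$ in $C_\beta^{\mathbb{R}^3,-}$, the random shift $\sigma\eta^\epsilon(\theta_s\omega)$ enters identically in both arguments of $g_m$ and cancels in the Lipschitz comparison, so hypothesis \textbf{(H1)} gives
\begin{equation*}
\bigl|g_m(X_1+\sigma\eta^\epsilon,Y_1,Z_1)(s)-g_m(X_2+\sigma\eta^\epsilon,Y_2,Z_2)(s)\bigr|\le L_1 e^{\beta s}\|U_1-U_2\|_{C_\beta^{\mathbb{R}^3,-}}.
\end{equation*}
Inserting this into each component of $\mathcal{J}U_1-\mathcal{J}U_2$ and multiplying by $e^{-\beta t}$ reduces the contraction estimate to bounding three explicit Volterra kernels: $\tfrac{1}{\epsilon}\int_{-\infty}^t e^{-\beta(t-s)}\,ds$ from the fast mode and $\bigl|\int_0^t e^{-\beta(t-s)}\,ds\bigr|$ from each slow mode. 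Choosing $\beta$ in the window permitted by the dissipativity in \textbf{(H3)} and the scale separation $1/\epsilon$, so that the resulting sum, times $L_1$, stays strictly below $1$, yields
\begin{equation*}
\|\mathcal{J}U_1-\mathcal{J}U_2\|_{C_\beta^{\mathbb{R}^3,-}}\le \kappa\,\|U_1-U_2\|_{C_\beta^{\mathbb{R}^3,-}},\qquad \kappa<1,
\end{equation*}
and the Banach fixed-point theorem closes the argument.

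The hard part is calibrating $\beta$. The fast-mode integral $\int_{-\infty}^t e^{-\beta(t-s)}\,ds=1/\beta$ pushes $\beta$ to be large (to compensate the $1/\epsilon$ prefactor), while the slow-mode integrals $\bigl|\int_0^t e^{-\beta(t-s)}\,ds\bigr|$ degenerate as $t\to-\infty$ unless $\beta$ is small; using only \textbf{(H1)} one cannot close a contraction on $C_\beta^{\mathbb{R}^3,-}$. The monotonicity in \textbf{(H3)} resolves this: replacing the naive Lipschitz bound on $g_1$ by the dissipative estimate $(X_1-X_2)(g_1(X_1,\cdot)-g_1(X_2,\cdot))\le -L|X_1-X_2|^2$ in the fast equation opens a nontrivial range of admissible $\beta$ in which both Volterra estimates are simultaneously controllable, and the smallness $\epsilon\ll 1$ then ensures $\kappa$ can be taken strictly less than $1$ uniformly in $\omega$.
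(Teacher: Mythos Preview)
Your overall framework---recasting \eqref{sde} as a fixed-point problem for the Lyapunov--Perron operator on $C_\beta^{\mathbb{R}^3,-}$, exploiting that the random shift $\sigma\eta^\epsilon(\theta_s\omega)$ cancels in the Lipschitz comparison, and closing with the Banach fixed-point theorem---is exactly what the paper does. The divergence is in how the competing kernel estimates are reconciled.

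The paper does \emph{not} invoke \textbf{(H3)} in this lemma at all. Your diagnosis of the ``hard part'' rests on the implicit choice $\beta>0$: you compute $\int_{-\infty}^t e^{-\beta(t-s)}\,ds=1/\beta$ (which only converges for $\beta>0$) and then observe that the slow-mode kernels $\bigl|\int_0^t e^{-\beta(t-s)}\,ds\bigr|$ blow up as $t\to-\infty$. The paper's resolution is instead to take $\beta$ \emph{negative} and large in modulus, namely $\beta=-\gamma/\epsilon$. With $\beta<0$ and $t\le 0$ one has $e^{-\beta t}\le 1$, so the slow kernels satisfy
\[
\Bigl|\int_0^t e^{-\beta(t-s)}\,ds\Bigr|=\frac{1-e^{-\beta t}}{|\beta|}\le\frac{1}{|\beta|}=\frac{\epsilon}{\gamma},
\]
which is uniformly small, while the paper records the fast contribution as $K/(-\epsilon\beta)=K/\gamma$. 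The resulting contraction factor $\varrho(\beta,K,\epsilon)=\frac{K}{-\epsilon\beta}+\frac{2K}{|\beta|}\to K/\gamma$ as $\epsilon\to 0$, and the argument closes for $\epsilon$ small using only \textbf{(H1)}. In short, the tension you describe is an artifact of the sign of $\beta$; working in $C_{-\gamma/\epsilon}^{\mathbb{R}^3,-}$ (functions permitted to grow like $e^{\gamma t/\epsilon}$ as $t\to-\infty$) is the standard device in slow-manifold constructions, and it makes the appeal to the monotonicity hypothesis unnecessary here.
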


\begin{proof} By using Banach fixed point theorem, we prove that $$U(t,\omega,U_{0})=\Big(X\big(t,\omega,(X_{0},Y_{0},Z_{0})^{T}\big) ,Y\big(t,\omega,(X_{0},Y_{0},Z_{0})^{T}\big),Z\big(t,\omega,(X_{0},Y_{0},Z_{0})^{T}\big)\Big)^{T}$$ is the unique solution of (\ref{sde}). For the prove of it, define three operators for $t\leq0$:
$$\mathfrak{J}_{1}(U)[t]=\frac{1}{\epsilon}\int_{-\infty}^{t}g_{1}\big(X(s)+\sigma\eta^{\epsilon}(\theta_{s}\omega),Y(s),Z(s)\big)ds,$$
$$\mathfrak{J}_{2}(U)[t]=Y_{0}+\int_{0}^{t}g_{2}\big(X(s)+\sigma\eta^{\epsilon}(\theta_{s}\omega),Y(s),Z(s)\big)ds,$$
$$\mathfrak{J}_{3}(U)[t]=Z_{0}+\int_{0}^{t}g_{3}\big(X(s)+\sigma\eta^{\epsilon}(\theta_{s}\omega),Y(s),Z(s)\big)ds.$$
Then Lyapunov-Perron transform is defined to be
$$\mathfrak{J}(U)=\left( {\begin{array}{*{20}{c}}
{\mathfrak{J}_{1}(U)}\\
\mathfrak{J}_{2}(U)\\
\mathfrak{J}_{3}(U)\\
\end{array}} \right)
=\big(\mathfrak{J}_{1}(U),\mathfrak{J}_{2}(U),\mathfrak{J}_{3}(U)\big)^{T}.$$
Now, it is needed to prove that $\mathfrak{J}$ maps $C_{\beta}^{\mathbb{R}^3,-}$ into itself.
Take $U=(X,Y,Z)^{T}\in C_{\beta}^{\mathbb{R}^3,-}$ satisfying:
\begin{align*}||\mathfrak{J}_{1}(U)[t]||_{C_{\beta}^{\mathbb{R},-}}&=||\frac{1}{\epsilon}\int_{-\infty}^{t}g_{1}\big(X(s)+\sigma\eta^{\epsilon}(\theta_{s}\omega),Y(s),Z(s)\big)ds||_{C_{\beta}^{\mathbb{R},-}}\\
&\leq\frac{1}{\epsilon}\mathop { \mbox{sup} }\limits_{t\in (-\infty,0]}\Big\{e^{-\beta t}\int_{-\infty}^{t}\big|g_{1}\big(X(s)+\sigma\eta^{\epsilon}(\theta_{s}\omega),Y(s),Z(s)\big)\big|ds\Big\}\\
&\leq\frac{K}{\epsilon}\mathop {\mbox{sup} }\limits_{t\in (-\infty,0]}\Big\{e^{-\beta t}\int_{-\infty}^{t}\big(|X(s)|+|Y(s)|+|Z(s)|\big)ds\Big\}+ \mathcal{C}_{1}\\ &\leq\frac{K}{\epsilon}\mathop {\mbox{sup} }\limits_{t\in(-\infty,0]}\Big\{\int_{-\infty}^{t}e^{-\beta(t-s)}ds\Big\}||U||_{C_{\beta}^{\mathbb{R}^3,-}}+ \mathcal{C}_{1}\\&=\frac{K}{-\epsilon\beta}||U||_{C_{\beta}^{\mathbb{R}^3,-}}+ \mathcal{C}_{1}.\end{align*}
Similarly, we have
\begin{align*}||\mathfrak{J}_{2}(U)[t]||_{C_{\beta}^{\mathbb{R},-}}&=||Y_{0}+\int_{0}^{t}g_{2}\big(X(s)+\sigma\eta^{\epsilon}(\theta_{s}\omega),Y(s),Z(s)\big)ds||_{C_{\beta}^{\mathbb{R},-}}\\
&\leq\mathop { \mbox{sup} }\limits_{t\in (-\infty,0]}\Big\{e^{-\beta t}\int_{0}^{t}\big|g_{2}\big(X(s)+\sigma\eta^{\epsilon}(\theta_{s}\omega),Y(s),Z(s)\big)\big|ds\Big\}+||Y_{0}||_{C_{\beta}^{\mathbb{R},-}}\\
&\leq\mathop {\mbox{sup} }\limits_{t\in (-\infty,0]}\Big\{e^{-\beta t}\int_{0}^{t}\big(|X(s)|+|Y(s)|+|Z(s)|\big)ds\Big\}+ \mathcal{C}_{2}\\ &\leq K\mathop {\mbox{sup} }\limits_{t\in(-\infty,0]}\Big\{\int_{0}^{t}e^{-\beta(t-s)}ds\Big\}||U||_{C_{\beta}^{\mathbb{R}^3,-}}+ \mathcal{C}_{2}\\&=\frac{K}{\beta}||U||_{C_{\beta}^{\mathbb{R}^3,-}}+ \mathcal{C}_{2}.\end{align*}
Furthermore,
\begin{align*}||\mathfrak{J}_{3}(U)[t]||_{C_{\beta}^{\mathbb{R},-}}&=||Z_{0}+\int_{0}^{t}g_{3}\big(X(s)+\sigma\eta^{\epsilon}(\theta_{s}\omega),Y(s),Z(s)\big)ds||_{C_{\beta}^{\mathbb{R},-}}\\
&\leq\mathop { \mbox{sup} }\limits_{t\in (-\infty,0]}\Big\{e^{-\beta t}\int_{0}^{t}\big|g_{3}\big(X(s)+\sigma\eta^{\epsilon}(\theta_{s}\omega),Y(s),Z(s)\big)\big|ds\Big\}+||Z_{0}||_{C_{\beta}^{\mathbb{R},-}}\\
&\leq\mathop {\mbox{sup} }\limits_{t\in (-\infty,0]}\Big\{e^{-\beta t}\int_{0}^{t}\big(|X(s)|+|Y(s)|+|Z(s)|\big)ds\Big\}+ \mathcal{C}_{3}\\ &\leq K\mathop {\mbox{sup} }\limits_{t\in(-\infty,0]}\Big\{\int_{0}^{t}e^{-\beta(t-s)}ds\Big\}||U||_{C_{\beta}^{\mathbb{R}^3,-}}+ \mathcal{C}_{3}\\&=\frac{K}{\beta}||U||_{C_{\beta}^{\mathbb{R}^3,-}}+ \mathcal{C}_{3}.\end{align*}
By using Lyapunov-Perron transform, the estimate of $\mathfrak{J}$ in combined form is
\begin{equation*}
||\mathfrak{J}(U)||_{C_{\beta}^{\mathbb{R}^3,-}}\leq\varrho(\beta,K,\epsilon)||U||_{C_{\beta}^{\mathbb{R}^3,-}}+\mathcal{C},
\end{equation*}
where
\begin{equation*}
\varrho(\beta,K,\epsilon)= \frac{-K}{\epsilon\beta}+\frac{K}{\beta}+\frac{K}{\beta}.
\end{equation*}
Hence $\mathfrak{J}(U)$ is in $C_{\beta}^{\mathbb{R}^3,-}$ for all $U\in C_{\beta}^{\mathbb{R}^3,-}$, which means that $\mathfrak{J}$ maps $C_{\beta}^{\mathbb{R}^3,-}$ into itself.

Now we should show that the map $\mathfrak{J}$ is contractive. For this, take $U=(X,Y,Z)^{T},\tilde{U}=(\tilde{X},\tilde{Y},\tilde{Z})^{T}\in C_{\beta}^{\mathbb{R}^3,-}$,
\begin{align*}&||\mathfrak{J}_{1}(U)-\mathfrak{J}_{1}(\tilde{U})||_{C_{\beta}^{\mathbb{R},-}}\\
&\leq\frac{1}{\epsilon}\mathop {\mbox{sup} }\limits_{t\in (-\infty,0]}\Big\{e^{-\beta t}\int_{-\infty}^{t}\Big|g_1\big(X(s)+\sigma\eta^{\epsilon}(\theta_{s}\omega),Y(s),Z(s)\big)-g_1\big(\tilde{X}(s)+\sigma\eta^{\epsilon}(\theta_{s}\omega),\tilde{Y}(s),\tilde{Z}(s)\big)\Big|ds\Big\}\\
&\leq\frac{K}{\epsilon}\mathop {\mbox{sup} }\limits_{t\in(-\infty,0]}\Big\{e^{-\beta t }\int_{-\infty}^{t}\big(|X(s)-\tilde{X}(s)|+|Y(s)-\tilde{Y}(s)|+|Z(s)-\tilde{Z}(s)|\big)ds\Big\}\\
&\leq\frac{K}{\epsilon}\mathop {\mbox{sup} }\limits_{t\in(-\infty, 0]}\Big\{\int_{-\infty}^{t}e^{-\beta(t-s)}ds\Big\}||U-\tilde{U}||_{C_{\beta}^{\mathbb{R}^3,-}}\\
&=\frac{K}{-\epsilon\beta}||U-\tilde{U}||_{C_{\beta}^{\mathbb{R}^3,-}}.\end{align*}
Using the same way,
\begin{align*}
||\mathfrak{J}_{2}(U)-\mathfrak{J}_{2}(\tilde{U})||_{C_{\beta}^{\mathbb{R},-}}&\leq K \mathop {\mbox{sup} }\limits_{t\in(-\infty, 0]}\Big\{\int_{t}^{0}e^{-\beta(t-s)}ds\Big\}||U-\tilde{U}||_{C_{\beta}^{\mathbb{R}^3,-}}\\
&\leq\frac{ K}{\beta}||U-\tilde{U}||_{C_{\beta}^{\mathbb{R}^3,-}}.
\end{align*}
Moreover,
\begin{align*}
||\mathfrak{J}_{3}(U)-\mathfrak{J}_{3}(\tilde{U})||_{C_{\beta}^{\mathbb{R},-}}&\leq K \mathop {\mbox{sup} }\limits_{t\in(-\infty, 0]}\Big\{\int_{t}^{0}e^{-\beta(t-s)}ds\Big\}||U-\tilde{U}||_{C_{\beta}^{\mathbb{R}^{3},-}}\\
&\leq \frac{ K}{\beta}||U-\tilde{U}||_{C_{\beta}^{\mathbb{R}^{3},-}}.
\end{align*}
Combing the three together,
\begin{equation*}
||\mathfrak{J}(U)-\mathfrak{J}(\tilde{U})||_{C_{\beta}^{\mathbb{R}^{3},-}}\leq\varrho(\beta,K,\epsilon)||U-\tilde{U}||_{C_{\beta}^{\mathbb{R}^{3},-}},
\end{equation*}
where\begin{align*}&\varrho(\beta,K,\epsilon)=\frac{K}{-\epsilon\beta}+\frac{K}{\beta}+\frac{K}{\beta}.\end{align*}
By setting $\beta=-\frac{\gamma}{\epsilon}$,
\begin{align*}\varrho(\beta,K,\epsilon)\rightarrow\frac{K}{\gamma} \mbox{ for } \epsilon\rightarrow0.\end{align*}
So, there exists a sufficiently small $\epsilon_{0}\rightarrow 0$ with property
\begin{align*}0<\varrho(\beta,K,\epsilon)< 1,\mbox{ for  }\epsilon \in  (0,\epsilon_{0}).\end{align*} Hence, the map $\mathfrak{J}$ in $C_{-\frac{\gamma}{\epsilon}}^{\mathbb{R}^{3},-}$ is contractive. By Banach fixed point theorem,  every contractive mapping in Banach space has a unique fixed point. Thus, \eqref{sde} has the unique solution \begin{align*}
U(t,\omega,U_{0})=\big(X(t,\omega,(X_{0},Y_{0},Z_{0})^{T}),Y(t,\omega,(X_{0},Y_{0},Z_{0})^{T}),Z(t,\omega,(X_{0},Y_{0},Z_{0})^{T})\big)^{T} \mbox{ in } C_{-\frac{\gamma}{\epsilon}}^{\mathbb{R}^{3},-}.
\end{align*}
\end{proof}
From Lemma \ref{Uu} we obtain the following remark.\\
\begin{remark} For any $(X_{0},Y_{0}, Z_{0})^{T}$, $(X'_{0},Y'_{0},Z'_{0})^{T}\in \mathbb{R}^3$, and for all $\omega \in \Omega, Y_{0}, Y'_{0},Z_{0}, Z'_{0} \in \mathbb{R},$ there is an $\epsilon_{0}>0$ such that
\begin{align}\label{eq9}
||U\big(t,\omega,(X_{0},Y_{0},Z_{0})^{T}\big)-U\big(t,\omega,(X'_{0},Y'_{0},Z'_{0})^{T}\big)||_{C_{-\frac{\gamma}{\epsilon}}^{\mathbb{R}^{3},-}}\leq\frac{1}{1-\varrho(\beta,K,\epsilon)}\big(|Y_{0}-Y_{0}'|+|Z_{0}-Z_{0}'|\big).\end{align}\end{remark}
\begin{proof} Instead of writing $U\big(t,\omega,(X_{0},Y_{0},Z_{0})^{T}\big)$ and $U\big(t,\omega,(X'_{0},Y'_{0},Z'_{0})^{T}\big)$, we write $U(t,\omega,Y_{0},Z_{0})$ and $U(t,\omega,Y'_{0},Z'_{0})$. For all $\omega \in \Omega$ and $Y_{0}, Y'_{0}, Z_{0}, Z'_{0}\in\mathbb{R}$, we determine an upper bound
 \begin{align*}
&||U(t,\omega,Y_{0},Z_{0})-U(t,\omega,Y'_{0},Z'_{0})||_{C_{-\frac{\gamma}{\epsilon}}^{\mathbb{R}^{3},-}}\\ &=||X(t,\omega,Y_{0},Z_{0})-X(t,\omega,Y'_{0},Z'_{0})||_{C_{-\frac{\gamma}{\epsilon}}^{\mathbb{R},-}}+||Y(t,\omega,Y_{0},Z_{0})-Y(t,\omega,Y'_{0},Z'_{0})||_{C_{-\frac{\gamma}{\epsilon}}^{\mathbb{R},-}}+||Z(t,\omega,Y_{0},Z_{0})-Z(t,\omega,Y'_{0},Z'_{0})||_{C_{-\frac{\gamma}{\epsilon}}^{\mathbb{R},-}}\\
&\leq\big(\frac{K}{-\epsilon\beta}+\frac{ 2K}{\beta}\big)\times||U(t,\omega,Y_{0},Z_{0})-U(t,\omega,Y'_{0},Z'_{0})||_{C_{-\frac{\gamma}{\epsilon}}^{\mathbb{R}^{3},-}} +|Y_{0}-Y'_{0}|+|Z_{0}-Z'_{0}|\\
&=\varrho(\beta,K,\epsilon)||U(t,\omega,Y_{0},Z_{0})-U(t,\omega,Y'_{0},Z'_{0})||_{C_{-\frac{\gamma}{\epsilon}}^{\mathbb{R}^{3},-}}+|Y_{0}-Y'_{0}|+|Z_{0}-Z'_{0}|.
\end{align*}
Therefore \eqref{eq9} is valid.
\end{proof}

Next, with the help of Lyapunov-Perron method we will construct the slow manifold as a random graph.
\begin{theorem}
Assume that the hypotheses {\rm \textbf{(H1)}-\textbf{(H3)}} hold. Then for sufficiently small $\epsilon>0$, random system \eqref{eq4} posses a Lipschitz random slow manifold:$$\mathcal{M}^{\epsilon}(\omega)=\big\{\big(l^{\epsilon}(\omega,Y_{0},Z_{0}),Y_{0},Z_{0}\big)^{T}:Y_{0},Z_{0}\in \mathbb{R}\big\},$$where $$l^{\epsilon}(\cdot,\cdot):\Omega\times \mathbb{R}^{2}\rightarrow\mathbb{R},$$ is a Lipschitz graph map with Lipschitz constant$$\text{\rm Lip} l^{\epsilon}(\omega,\cdot)\leq\frac{K}{\gamma-K(1-2\epsilon)}.$$
\end{theorem}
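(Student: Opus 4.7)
The plan is to apply the Lyapunov--Perron method, using the unique backward solution $U(\cdot,\omega,U_0)$ in $C^{\mathbb{R}^3,-}_{-\gamma/\epsilon}$ furnished by Lemma~\ref{Uu} to define a graph map $l^{\epsilon}$ whose graph is the slow manifold. Concretely, for fixed $\omega$ and slow datum $(Y_0,Z_0)\in\mathbb{R}^2$, I select the initial fast coordinate by reading off the first line of \eqref{sde} at $t=0$ and set
\begin{equation*}
 l^{\epsilon}(\omega,Y_0,Z_0)\;:=\;\frac{1}{\epsilon}\int_{-\infty}^{0}g_{1}\bigl(X(s)+\sigma\eta^{\epsilon}(\theta_{s}\omega),Y(s),Z(s)\bigr)\,ds\;=\;X\bigl(0,\omega,(X_0,Y_0,Z_0)^{T}\bigr),
\end{equation*}
where $U=(X,Y,Z)^{T}$ is the fixed point of the operator $\mathfrak{J}$ constructed in Lemma~\ref{Uu}. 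The candidate manifold is then $\mathcal{M}^{\epsilon}(\omega):=\{(l^{\epsilon}(\omega,Y_0,Z_0),Y_0,Z_0)^{T}:Y_0,Z_0\in\mathbb{R}\}$.

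Next, I would verify positive invariance of $\mathcal{M}^{\epsilon}$ under the cocycle $\Phi$. If $U_0=(l^{\epsilon}(\omega,Y_0,Z_0),Y_0,Z_0)^{T}$, then by construction the map $\tau\mapsto U(\tau,\omega,U_0)$, $\tau\le 0$, is the Lyapunov--Perron fixed point over $\omega$. For $t\ge 0$ the cocycle identity $\Phi(t+\tau,\omega,U_0)=\Phi(\tau,\theta_{t}\omega,\Phi(t,\omega,U_0))$, together with the shift relation $\eta^{\epsilon}(\theta_{s+t}\omega)=\eta^{\epsilon}(\theta_{s}(\theta_{t}\omega))$, shows that $\tau\mapsto U(t+\tau,\omega,U_0)$, $\tau\le 0$, satisfies the integral equation \eqref{sde} driven by $\theta_{t}\omega$ with slow datum $(Y(t),Z(t))$; uniqueness in $C^{\mathbb{R}^3,-}_{-\gamma/\epsilon}$ (Lemma~\ref{Uu}) then identifies the fast coordinate of $\Phi(t,\omega,U_0)$ with $l^{\epsilon}(\theta_{t}\omega,Y(t),Z(t))$, so that $\Phi(t,\omega,U_0)\in\mathcal{M}^{\epsilon}(\theta_{t}\omega)$.

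For the Lipschitz bound I would work directly from the integral representation of $l^{\epsilon}$. Applying (H1) and the pointwise majorisation $|\Phi(s)|\le e^{\beta s}\|\Phi\|_{C^{\mathbb{R},-}_{\beta}}$ for $s\le 0$ with $\beta=-\gamma/\epsilon$ yields
\begin{equation*}
 |l^{\epsilon}(\omega,Y_0,Z_0)-l^{\epsilon}(\omega,Y_0',Z_0')|\;\le\;\frac{K}{\epsilon}\int_{-\infty}^{0}e^{\beta s}\,ds\;\|U-U'\|_{C^{\mathbb{R}^3,-}_{\beta}}\;=\;\frac{K}{\gamma}\,\|U-U'\|_{C^{\mathbb{R}^3,-}_{\beta}}.
\end{equation*}
Substituting the estimate $\|U-U'\|_{C^{\mathbb{R}^3,-}_{\beta}}\le\gamma/(\gamma-K(1-2\epsilon))\cdot(|Y_0-Y_0'|+|Z_0-Z_0'|)$ provided by \eqref{eq9} then produces exactly the claimed Lipschitz constant $K/(\gamma-K(1-2\epsilon))$. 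Measurability of $\omega\mapsto l^{\epsilon}(\omega,Y_0,Z_0)$ is inherited from the Banach contraction that defines the fixed point $U$.

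The main technical point will be the invariance step: one has to track carefully how the driving noise $\eta^{\epsilon}$ is translated under $\theta_{t}$ and invoke uniqueness in $C^{\mathbb{R}^3,-}_{-\gamma/\epsilon}$ to match the shifted trajectory with the Lyapunov--Perron fixed point built over $\theta_{t}\omega$. Everything else---the explicit Lipschitz constant and the measurability of $l^{\epsilon}$---follows routinely from the contraction framework already assembled in Lemma~\ref{Uu} and the a priori bound \eqref{eq9}.
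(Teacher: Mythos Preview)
Your proposal is correct and follows essentially the same Lyapunov--Perron route as the paper: define $l^{\epsilon}$ as the first component of the fixed point of \eqref{sde} at $t=0$, derive the Lipschitz bound by combining the $\frac{K}{\gamma}$ factor from the $\mathfrak{J}_1$ estimate with the a~priori bound \eqref{eq9}, and verify positive invariance by shifting the backward trajectory and invoking uniqueness over $\theta_t\omega$. The only cosmetic difference is that the paper cites Castaing--Valadier (Theorem~III.9) explicitly to confirm $\mathcal{M}^{\epsilon}(\omega)$ is a random set, whereas you appeal directly to measurability of the contraction fixed point; both are valid.
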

\begin{proof} For any $Y_{0},Z_{0}\in \mathbb{R}$ we define Lyapunov-Perron map  $l^{\epsilon}$ by
\begin{equation*}
l^{\epsilon}(\omega, Y_{0},Z_{0})=\frac{1}{\epsilon}\int_{-\infty}^{0}g_{1}\big(X(s,\omega,Y_{0},Z_{0})+\sigma\eta^{\epsilon}(\theta_{s}\omega),Y(s,\omega,Y_{0},Z_{0}),Z(s,\omega,Y_{0},Z_{0})\big)ds.
\end{equation*}
Then by \eqref{eq9}, we obtain $$\big|l^{\epsilon}(\omega,Y_{0},Z_{0})-l^{\epsilon}(\omega,Y'_{0},Z'_{0})\big|\leq\frac{K}{-\epsilon\beta}\frac{1}{[1-\varrho(\beta,K,\epsilon)]}\big(|Y_{0}-Y'_{0}|+|Z_{0}-Z'_{0}|\big),$$
for all $Y_{0},Y'_{0},Z_{0},Z'_{0} \in \mathbb{R}$ and $\omega \in \Omega$. So$$\big|l^{\epsilon}(\omega,Y_{0},Z_{0})-l^{\epsilon}(\omega,Y'_{0},Z'_{0})\big|\leq\frac{K}{\gamma}\frac{1}{[1-\varrho(\beta,,K,\epsilon)]}\big(|Y_{0}-Y'_{0}|+|Z_{0}-Z'_{0}|\big),$$
for every $Y_{0},Y'_{0},Z_{0},Z'_{0} \in \mathbb{R}$ and $\omega \in \Omega$.
Utilizing Theorem III.9 in Casting and Valadier \cite[p.67]{castaing1977convex}, $\mathcal{M}^{\epsilon}(\omega)$ is a random set, i.e., for any $U=(X,Y,Z)^{T}\in \mathbb{R}^{3}$,
\begin{equation}\label{eq10}
\omega\mapsto \mathop {\mbox{inf} }\limits_{U'\in \mathbb{R}^3}\big|U-\big(l^{\epsilon}(\omega,\mathfrak{J}U'),\mathfrak{J}U'\big)^{T}\big|,
\end{equation}
is measurable. The space $\mathbb{R}^3$ has a countable dense subset $\mathbb{Q}^3$. Then the infimum in \eqref{eq10} is equivalent to
\begin{equation*}
\inf\limits_{U'\in \mathbb{Q}^3}\big|U-\big(l^{\epsilon}(\omega,\mathfrak{J}U'),\mathfrak{J}U'\big)^{T}\big|.
\end{equation*}
Under the infimum in \eqref{eq10} the measurability of any expression can be determined, since for all $U'$ in $\mathbb{R}^3$ the map $\omega\mapsto l^{\epsilon}(\omega,\mathfrak{J}U')$ is measurable. The slow flow is found as a random graph of $l^{\epsilon}$.\\
\indent Finally we need to show that $\mathcal{M}^{\epsilon}(\omega)$ is positively invariant, i.e., for all $U_{0} =(X_{0},Y_{0},Z_{0})^{T}$ in $\mathcal{M}^{\epsilon}(\omega),$ $ U(s,\omega,U_{0})$ is in $\mathcal{M}^{\epsilon}(\theta_{s}\omega)$ for every $s\geq 0.$ Note that $U(t+s,\omega,U_{0})$ is a solution of \begin{align*}
&dX=\frac{1}{\epsilon}g_{1}(X+\sigma\eta^{\epsilon}(\theta_{t}\omega), Y, Z)dt,\\&dY=g_{2}(X+\sigma \eta^{\epsilon}(\theta_{t}\omega), Y, Z)dt,\\&dZ=g_{3}(X+\sigma \eta^{\epsilon}(\theta_{t}\omega), Y, Z)dt,\end{align*} with initial value $U(s)=\big(X(s),Y(s),Z(s)\big)^{T}=U(s,\omega,U_{0})$. So, $U(t+s,\omega,U_{0})=U\big(t,\theta_{s}\omega,U(s,\omega,U_{0})\big)$. Since $U(\cdot,\omega,U_{0})$ is in $C_{-\frac{\gamma}{\epsilon}}^{\mathbb{R}^{3},-}$, we gain
$U\big(\cdot,\theta_{s}\omega,U(s,\omega,U_{0})\big)\in C_{-\frac{\gamma}{\epsilon}}^{\mathbb{R}^{3},-}$. Hence, $U(s,\omega,U_{0}) \in \mathcal{M}^{\epsilon}(\theta_{s}\omega)$.
\end{proof}

\section{Example}\label{E}
Namely, we consider the stochastic system
\begin{equation}\label{ESK}
\left\{
  \begin{array}{ll}
    \dot{x}=\frac{1}{\epsilon}(-x^{3}+3x-10y+5z+3)+\frac{\sigma}{\sqrt[\alpha]{\epsilon}}\dot{L}_{t}^{\alpha}, & \hbox{\text{in}\,\,$\mathbb{R}$,} \\
    \dot{y}=x-2y+z, & \hbox{\text{in}\,\,$\mathbb{R}$,} \\
    \dot{z}=y-z, & \hbox{\text{in}\,\,$\mathbb{R}$,}
  \end{array}
\right.
\end{equation}
where $x$ is the ``fast" component, $(y,z)$ is the ``slow" component, $k=-10$ and $\lambda(z)=-5z-3$.
\begin{figure}[H]
\begin{center}
  \begin{minipage}{2.1in}
\leftline{(a)}
\includegraphics[width=2.1in]{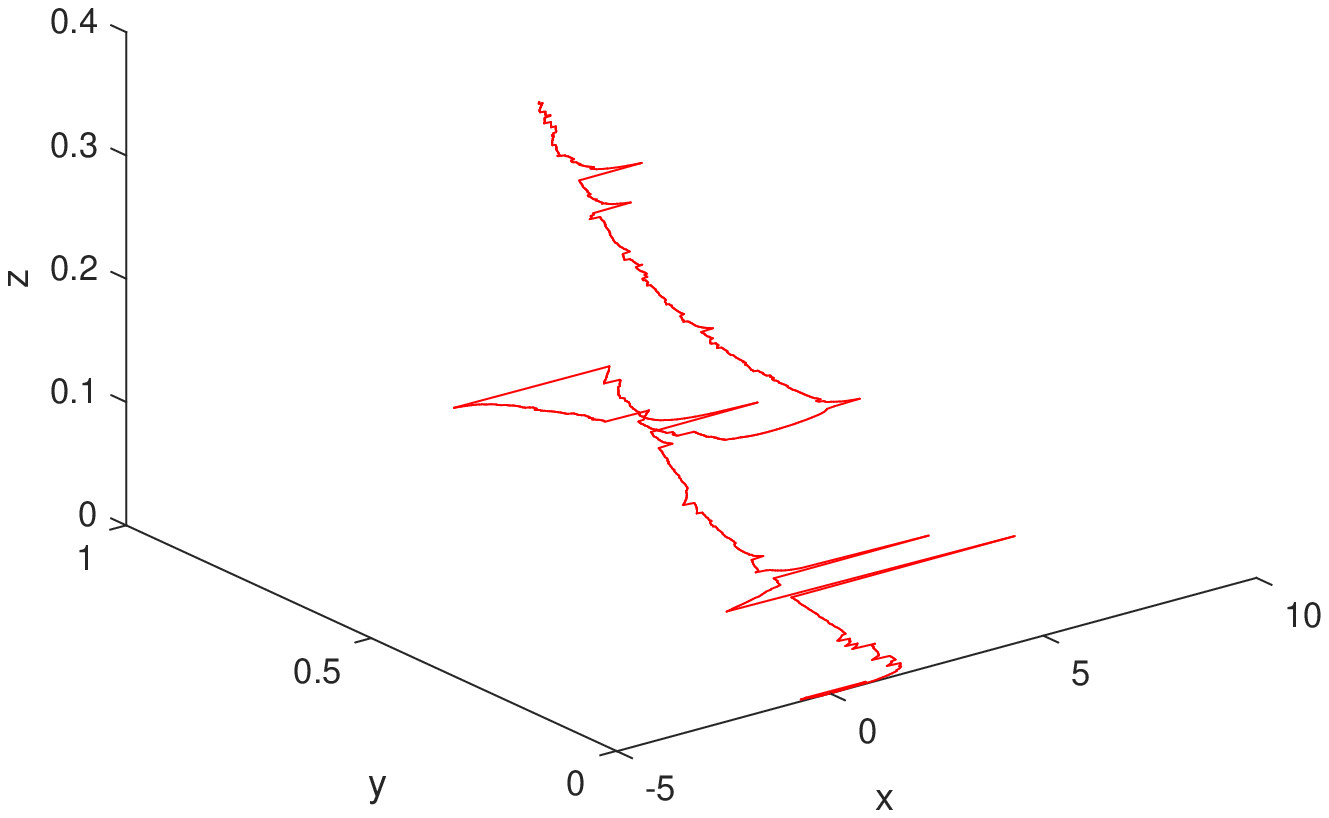}
\end{minipage}
\hfill
\begin{minipage}{2.1in}
\leftline{(b)}
\includegraphics[width=2.1in]{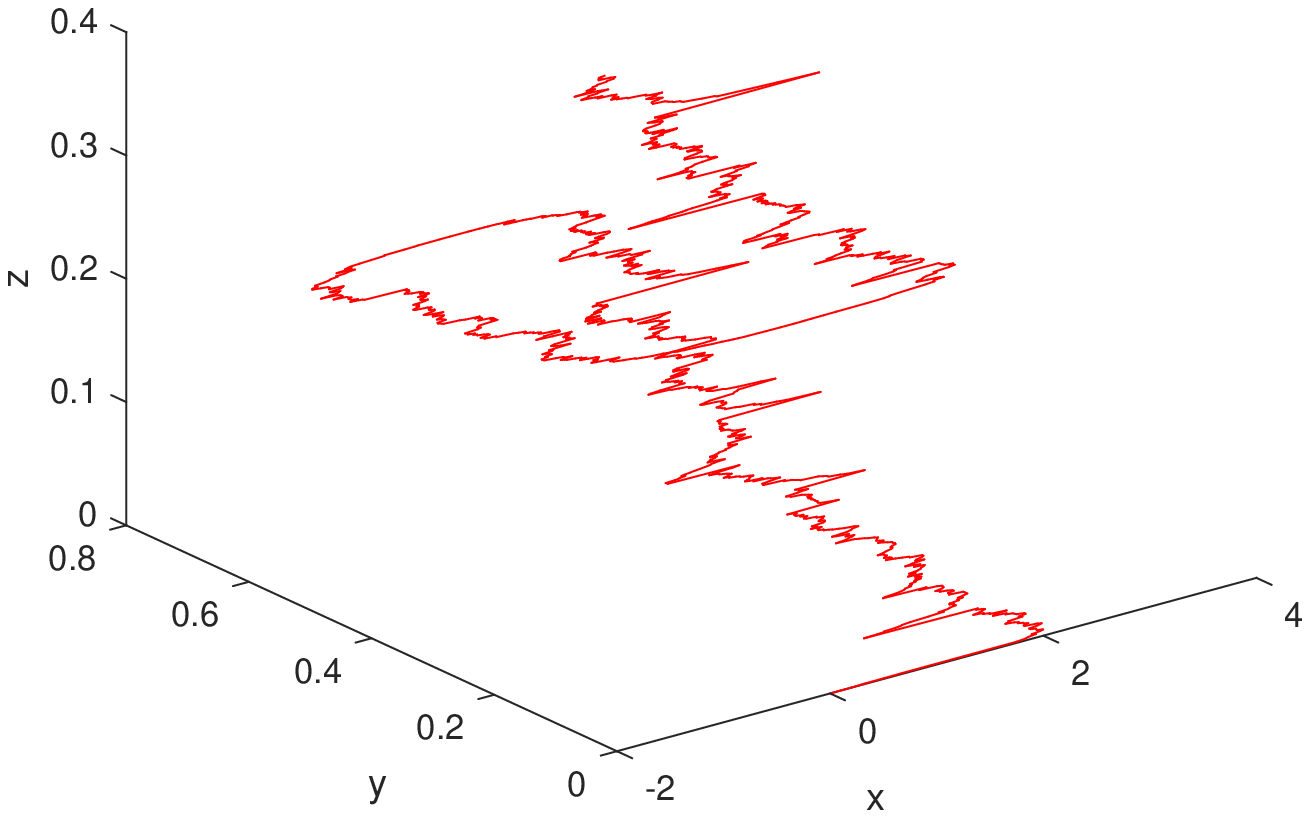}
\end{minipage}
\hfill
  \begin{minipage}{2.1in}
\leftline{(c)}
\includegraphics[width=2.1in]{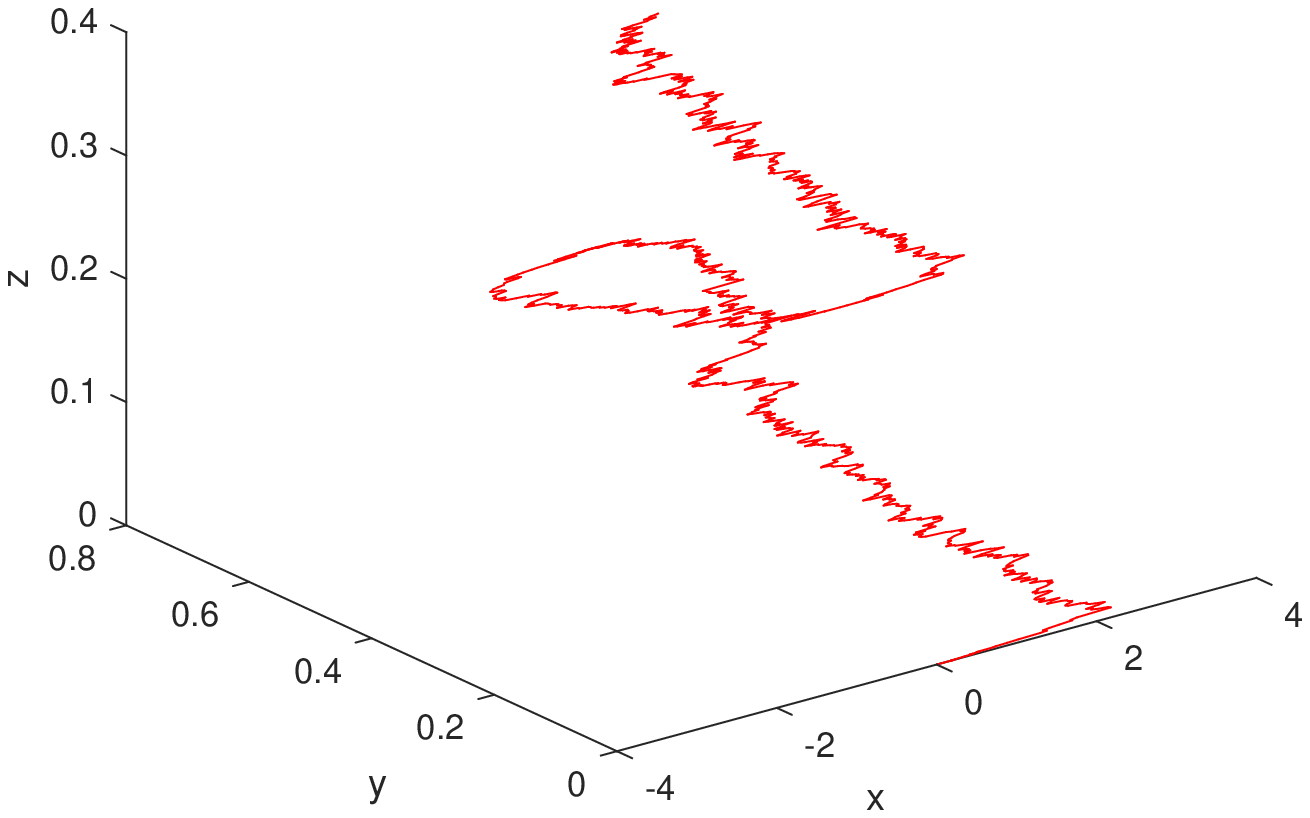}
\end{minipage}
\caption{The evolution of system \eqref{ESK} for the choice of
initial conditions $x(0)=y(0)=z(0)=0$ at the fixed noise intensity $\sigma=0.5$ with a scaling parameter $\epsilon=0.05$ and a gradual increase in the stability index: (a) $\alpha=0.8$; (b) $\alpha=1.6$; (c) $\alpha=1.9$.}\label{Fig1}
\end{center}
\end{figure}
If we fix $\sigma=0.5$ and increase $\alpha$ in system \eqref{ESK} for the choice of
initial conditions $x(0)=y(0)=z(0)=0$ with a scaling parameter $\epsilon=0.05$, we observe
the following typical sequence of events in Figure \ref{Fig1}. The stochastic nonlinear
dynamics with the stability index $\alpha=0.8$ display the complex spatio-temporal oscillations accompanied by few big jumps as depicted in Fig. \ref{Fig1}(a). With respect to $\alpha=1.6$, the external L\'evy noise leads to dramatically different dynamical behavior in the red trajectory, which is confirmed numerically in Fig. \ref{Fig1}(b).
 As the stability index $\alpha$ is increased
toward 1.9, the path of stochastic system \eqref{ESK} shows low frequency oscillations excited by L\'evy noise as illustrated in
Fig. \ref{Fig1}(c). For three different values of $\alpha$, stochastic noise of the the fast variable $x$  can significantly influence the
dynamics of the whole Koper model. However, the shapes of different trajectories from the viewpoint of the slow surface $(y,z)$ are consistent.

\begin{figure}[H]
\begin{center}
  \begin{minipage}{2.1in}
\leftline{(a)}
\includegraphics[width=2.1in]{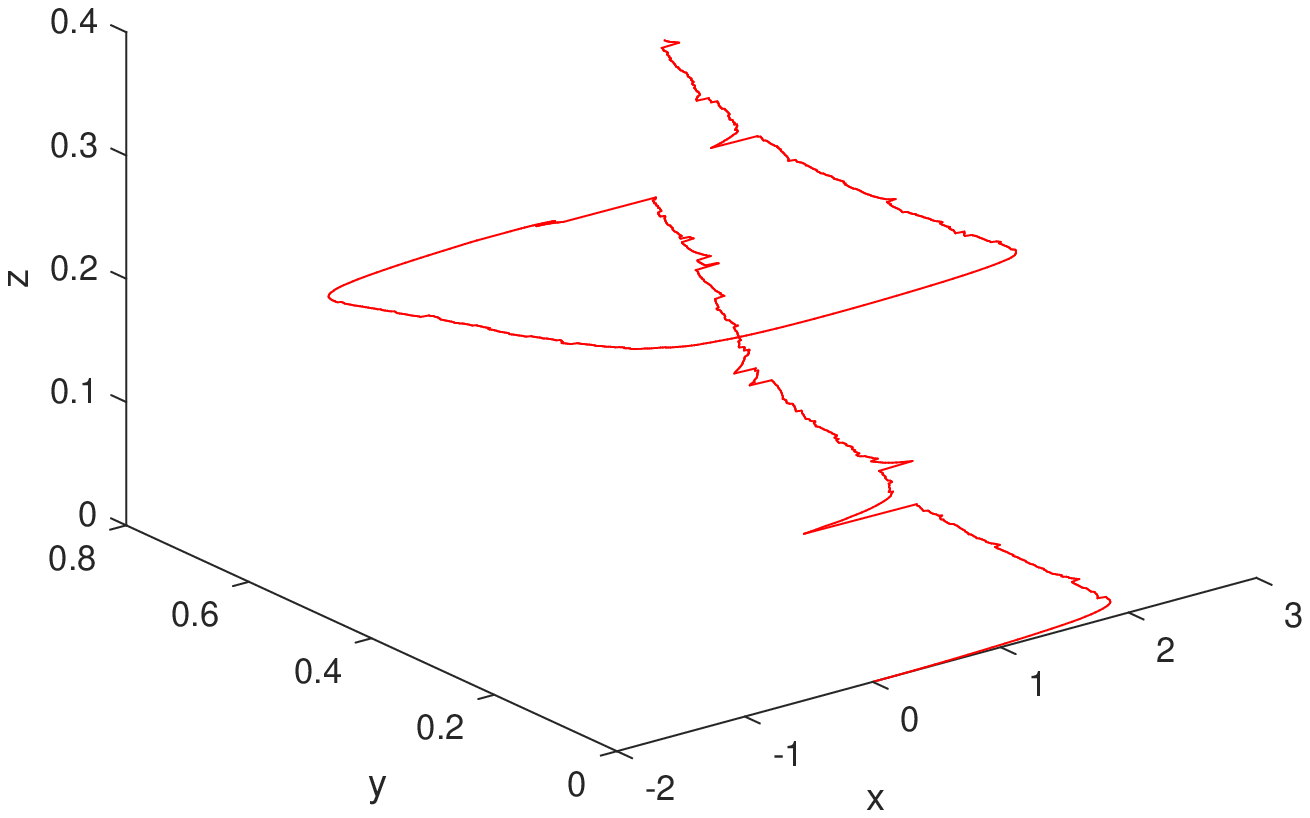}
\end{minipage}
\hfill
\begin{minipage}{2.1in}
\leftline{(b)}
\includegraphics[width=2.1in]{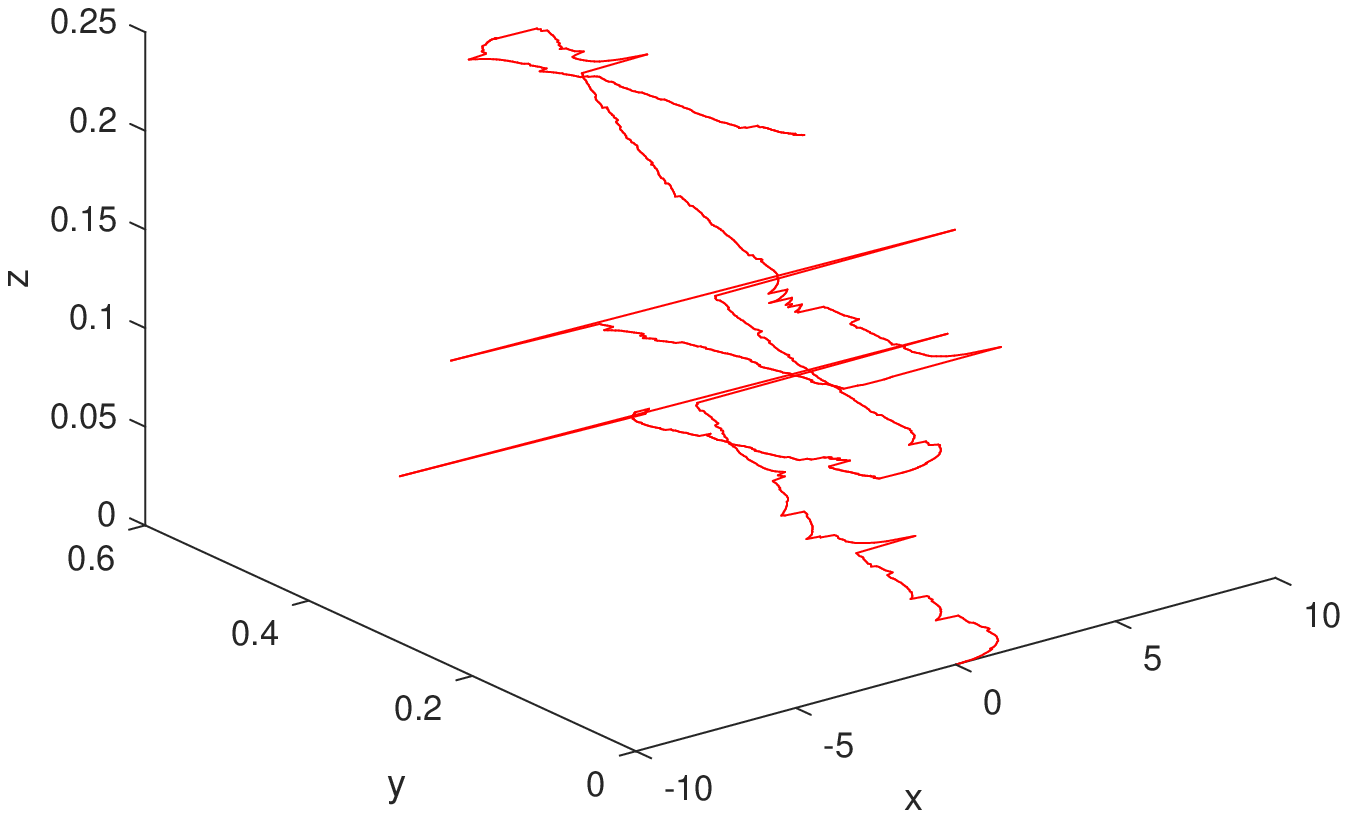}
\end{minipage}
\hfill
  \begin{minipage}{2.1in}
\leftline{(c)}
\includegraphics[width=2.1in]{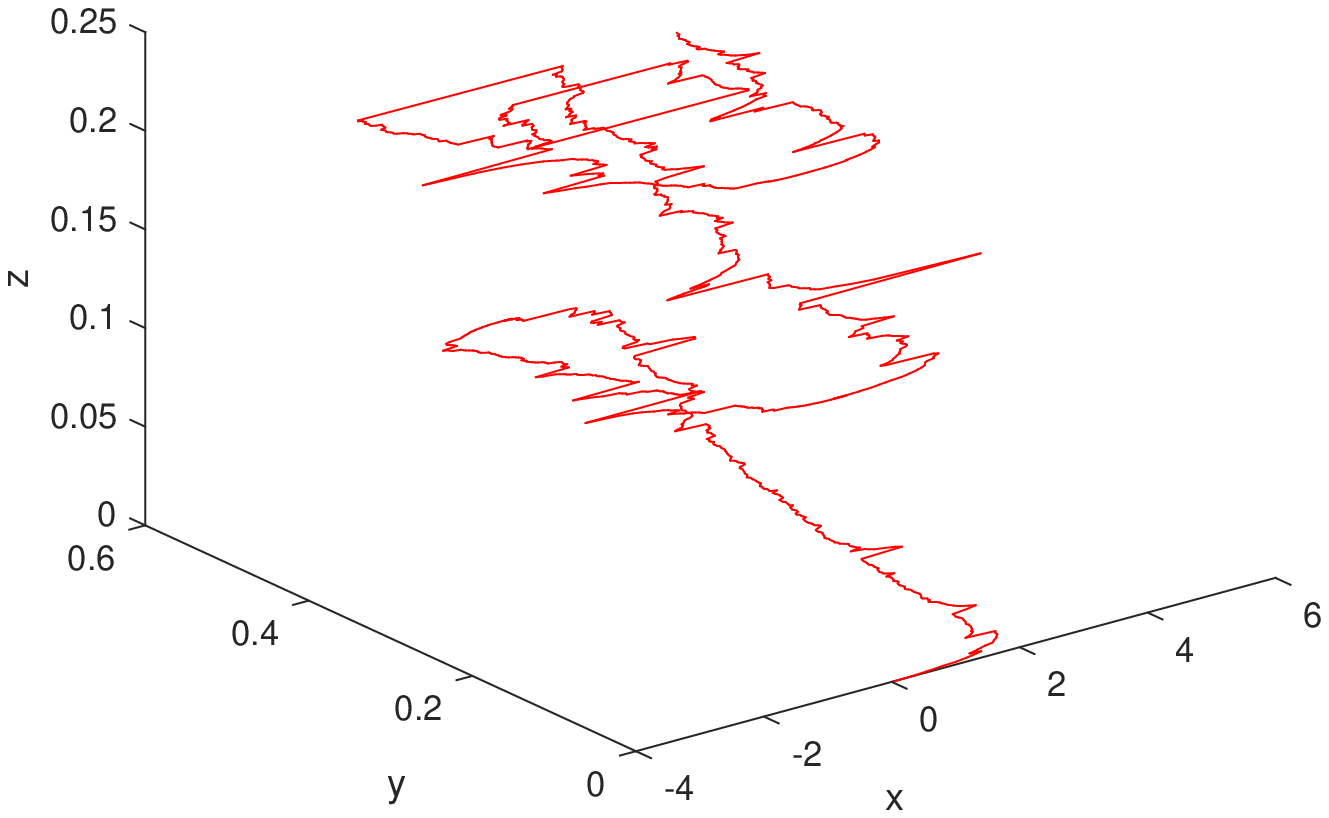}
\end{minipage}
\caption{The evolution of system \eqref{ESK} starting from $x(0)=y(0)=z(0)=0$ with a fixed stability index
$\alpha=1$ and a scaling parameter $\epsilon=0.05$ as the noise intensity increases: (a) $\sigma=0.1$; (b) $\sigma=0.5$; (c) $\sigma=0.8$.}\label{Fig2}
\end{center}
\end{figure}
Now we fix $\alpha=1$ and consider a variation of $\sigma$ in system \eqref{ESK} starting from $x(0)=y(0)=z(0)=0$ with a scaling parameter $\epsilon=0.05$; see Fig. \ref{Fig2}. For small enough $\sigma=0.1$, the path has spiral pattern with small-amplitude fluctuations, which  is clearly demonstrated in Fig. \ref{Fig2}(a). When $\sigma$-value just arrives at 0.5, the status of the path changes happens abruptly for stochastic system \eqref{ESK} as plotted in Fig. \ref{Fig2}(b). As the noise intensity $\sigma$ increases further to 0.8, large variations in the dynamics of the trajectory are indicated in Fig. \ref{Fig2}(c) because of strong external noise. Based on the effects of increasing noise intensities, the curves become more and more sophisticated.

 If we scale the time $t\rightarrow\epsilon t$
and use the self-similarity $\epsilon^{-1/\alpha}L_{\epsilon t}^{\alpha}\overset{d}{=}L_{t}^{\alpha}$, then stochastic system \eqref{ESK} in the sense of distribution is equal to
\begin{equation*}
\left\{
  \begin{array}{ll}
    dx=(-x^{3}+3x-10y+5z+3)dt+\sigma dL_{t}^{\alpha}, & \hbox{\text{in}\,\,$\mathbb{R}$,} \\
  dy=\epsilon(x-2y+z)dt, & \hbox{\text{in}\,\,$\mathbb{R}$,} \\
   dz=\epsilon(y-z)dt, & \hbox{\text{in}\,\,$\mathbb{R}$.}
  \end{array}
\right.
\end{equation*}
When $\sigma=0$, the deterministic system
\begin{equation}\label{Dks}
\left\{
  \begin{array}{ll}
     \dot{x}=(-x^{3}+3x-10y+5z+3), & \hbox{\text{in}\,\,$\mathbb{R}$,} \\
  \dot{y}=\epsilon(x-2y+z), & \hbox{\text{in}\,\,$\mathbb{R}$,} \\
    \dot{z}=\epsilon(y-z), & \hbox{\text{in}\,\,$\mathbb{R}$.}
  \end{array}
\right.
\end{equation}
has a unique fixed point $P=(1,1,1)$.  Linearize by finding
the Jacobian matrix. Hence
\begin{equation*}
J=\left(
    \begin{array}{ccc}
      3(1-x^2) & -10 & 5 \\
      \epsilon & -2\epsilon & \epsilon \\
      0 & \epsilon & -\epsilon \\
    \end{array}
  \right).
\end{equation*}
Therefore,
\begin{equation*}
J_P=\left(
    \begin{array}{ccc}
      0 & -10 & 5 \\
      \epsilon & -2\epsilon & \epsilon \\
      0 & \epsilon & -\epsilon \\
    \end{array}
  \right).
\end{equation*}
The stability of the equilibrium point $P$ is determined by the associated Jacobian matrix $J_P$. The trace $\text{tr}(J_P)=-3\epsilon$ is negative, and the determinant $\det(J_P)=-5\epsilon^2$ is also negative. It follows that three eigenvalues are negative. Thus, $P$ is a stable fixed point.
\begin{figure}[!htp]
	\begin{minipage}{0.48\linewidth}
		\leftline{(a)}
		\centerline{\includegraphics[height = 7.5cm, width = 9.5cm]{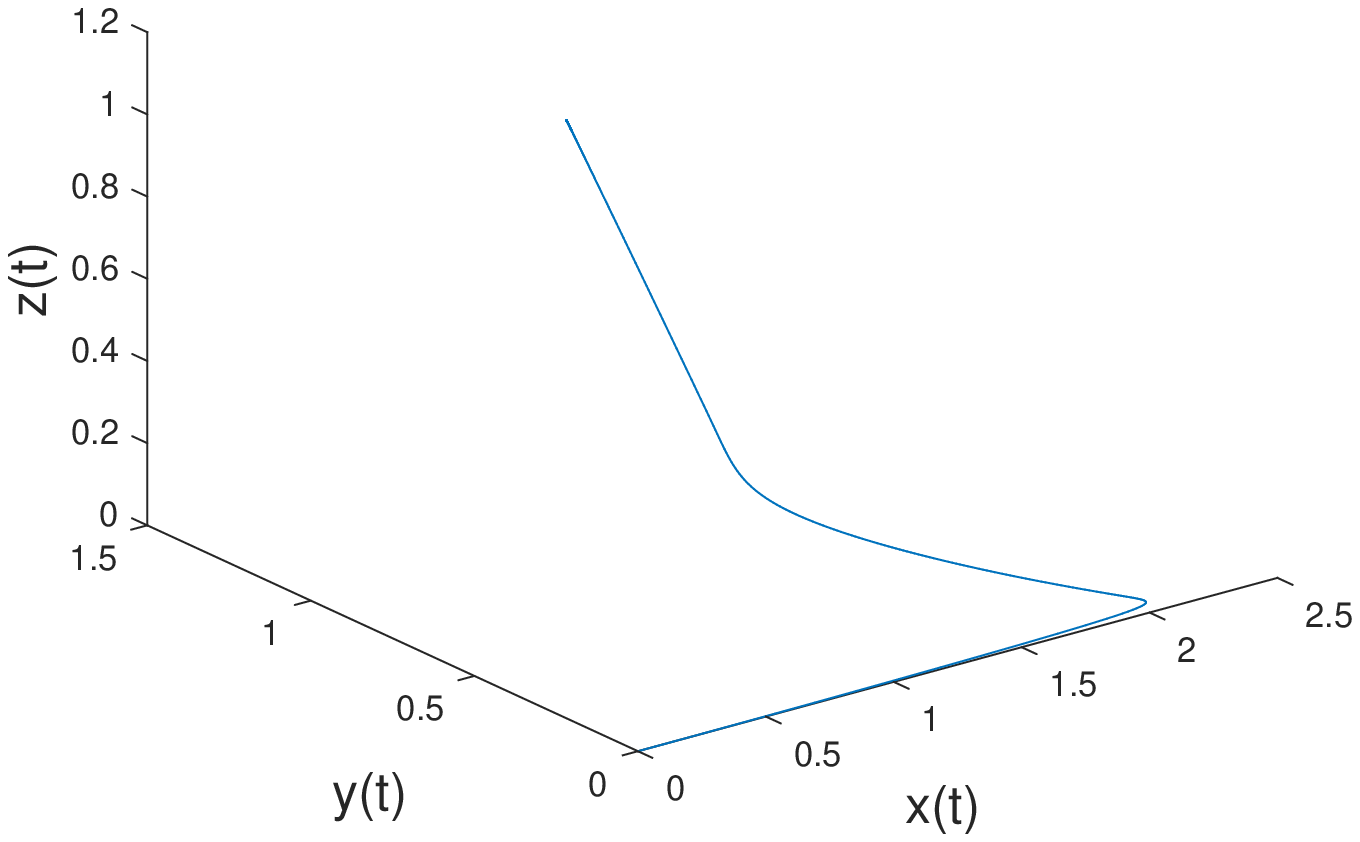}}
	\end{minipage}
	\hfill
	\begin{minipage}{0.48\linewidth}
		\leftline{(b)}
		\centerline{\includegraphics[height = 7.5cm, width = 9.5cm]{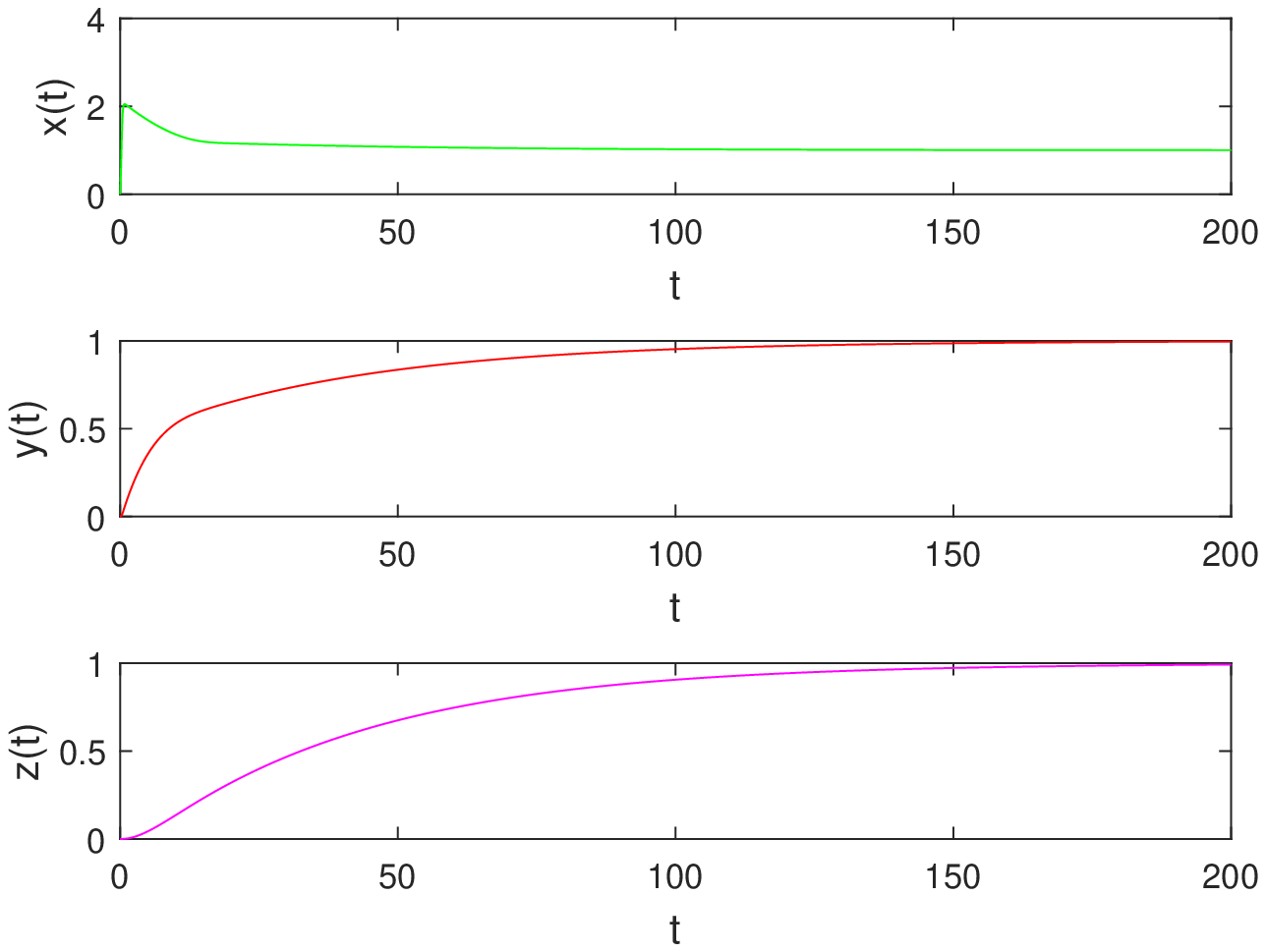}}
	\end{minipage}
	\caption{ When $x(0)=y(0)=z(0)=0$ and $\epsilon=0.05$: (a) one trajectory of system \eqref{Dks};
		(b) dynamical behaviors for the three variables $x(t)$, $y(t)$ and $z(t)$ in system \eqref{Dks}.
	}\label{Fig3}
\end{figure}

Observe that in the beginning the blue curve of system \eqref{Dks} with $\epsilon=0.05$ departs from $x(0)=y(0)=z(0)=0$ along the direction of $x$-axis, shown in Figure \ref{Fig3}(a).
But some time later, there is a gentle growing tendency for
the trajectory at $x(t)=2$ to move upward. With the increase of time, the path encounters a crucial turning point and has a precipitous climb to the stable fixed point $P$. More significantly, it is parallel to the
$(y,z)$-plane of the slow variables in this situation. By comparison with Fig. \ref{Fig1}-\ref{Fig2}, the Koper system \eqref{ESK} exhibits sensitivity to stochastic disturbance.

We sketch three time series data for the system \eqref{Dks} when $x(0)=y(0)=z(0)=0$ and $\epsilon=0.05$, as clearly detailed in Figure \ref{Fig3}(b).
It is manifest that the fast variable $x(t)$ jumps to 2 instantaneously at first, and then decreases to 1 rapidly and suddenly.
The green curve about $x(t)$ stays at the same level after $t=15$. From the time series drawings of the slow variables $y(t)$ and $z(t)$, they
both grow to 1 eventually. The red path about $y(t)$ increases much quicker than the purple trajectory about $z(t)$.
\section{Conclusions and future challenges}\label{Cf}

We investigated three-dimensional stochastic slow-fast Koper system \eqref{K} driven by $\alpha$-stable L\'evy noise.
We constructed the slow manifold in which the fast variable $x$ can be expressed as the random function of the two slow variables $y$ and $z$.
Stochastic nonlinear dimensionality reduction helped us make more accurate predictions by using stochastic differential equations
for the slow variables. We carried out the computation of the practical Koper models \eqref{ESK} and \eqref{Dks}.

When the scaling parameter $\hat{\epsilon}$ is sufficiently small, i.e., $0<\hat{\epsilon}\ll1$, stochastic Koper model \eqref{K} has three times scales: variable $x$ is fast, $y$ is slow, and $z$ is slower. It indicates the ratio of three times scales such that $|\frac{dx}{dt}|\gg|\frac{dy}{dt}|\gg|\frac{dz}{dt}|$. So it is meaningful to project high-dimensional dynamics onto lower-dimensional effective manifolds in this case.

What happens if the main bifurcation parameters $k$ and $\lambda(z)=:\lambda$ change? The folded node/focus and supercritical Hopf bifurcation are expected to occur in parameter space for the Koper model without noise.
By classifying the type and stability of the equilibrium points, we
obtain a bifurcation diagram under parameter variation.

We can generalize the consideration to a number of cases where the variables $x$, $y$ and $z$ both are pertubated by $\alpha$-stable L\'evy noises,  even to the extent that the influences are the more general L\'evy processes including multiplicative and additive effects. We may use It\^{o}, Stratonovich or Marcus type stochastic differential equations.

The orbits can escape from the region of the metastable equilibrium if we face random perturbation \cite{LYX}. It is interesting to characterize the most probable transition from one metastable state to another \cite{HCYD,TYTB}. The random slow manifold still depends on $k$ and $\lambda$. It is a challenge to plot the bifurcation diagram in the $(k,\lambda)$-plane of the random slow flow. Luckily, we can explore stochastic bifurcations in dynamical systems driven by L\'evy noises with support for statistical modeling and computation \cite{TTYBD,YB,YZD}.

We often have difficulty comprehending data in phase space with the attracting or repelling random slow manifolds.
Thus it is useful for visualization purposes by reducing data to a small number of dimensions.

\bigskip

\noindent\textbf{DATA AVAILABILITY}

Numerical algorithms source code that support the findings of this study are openly
available in GitHub, Ref. \cite{SY}.

\section*{References}

\end{document}